\newtheorem{proposition}{Proposition}
\newtheorem{theorem}{Theorem}
\newtheorem{lemma}{Lemma}
\newtheorem{remark}{Remark}
\journal{Transportation Research Part B: Methodological}
\begin{document}

\begin{frontmatter}



\title{Modeling Cascading Driver Interventions in Partially Automated Traffic: A Semi-Markov Chain Approach} 

\author[a]{Zihao Li}\
\author[a]{Fan Pu}
\author[b]{Soyoung Ahn\corref{cor1}}
\author[a]{Yang Zhou\corref{cor1}}

\affiliation[a]{organization={Zachry Department of Civil and Environmental Engineering, Texas A\&M University},
            addressline={Engineering Building, 201 Dwight Look}, 
            city={College Station},
            postcode={77840}, 
            state={TX},
            country={USA.}}
\affiliation[b]{organization={Department of Civil and Environmental Engineering, University of Wisconsin Madison},
            addressline={1415 Engineering Drive}, 
            city={Madison},
            postcode={53706}, 
            state={WI},
            country={USA.}}            

\cortext[cor1]{Corresponding author: sue.ahn@wisc.edu, yangzhou295@tamu.edu}


\begin{abstract}
This paper presents an analytical modeling framework for partially automated traffic, incorporating cascading driver intervention behaviors. In this framework, drivers of partially automated vehicles have the flexibility to switch driving modes (either AV or HDV) under lockout constraints. The cascading impact is captured by making the switching probability leader-dependent, highlighting the influence of the leading vehicle on mode choice and the potential propagation of mode changes throughout traffic. Due to the complexity of this system, traditional Markov-based methods are insufficient. To address this, the paper introduces an innovative semi-Markov chain framework with lockout constraints, ideally suited for modeling the system dynamics. This framework reformulates the system as a nonlinear model whose solution can be efficiently approximated using numerical methods from control theory, such as the Runge-Kutta algorithm. Moreover, the system is proven to be a piecewise affine bilinear system, with the existence of solutions and both local and global stability established via Brouwer's Fixed Point Theorem and the 1D Uncertainty Polytopes Theorem. Numerical experiments corroborate these theoretical findings, confirming the presence of cascading impacts and elucidating the influence of modeling parameters on traffic throughput, thereby deepening our understanding of the system's properties.
\end{abstract}



\begin{keyword}
Partial Automation Traffic\sep
Drivers Intervention\sep
Throughput Modeling\sep 
Cascading Impact\sep
Semi-Markov Process
\end{keyword}
\end{frontmatter}



\section{Introduction}
\vspace{-6pt}

Automated vehicle (AV) technologies have the potential to significantly improve overall traffic flow and efficiency \citep{chen2023stochastic,milanes2013cooperative,shladover2012impacts,talebpour2016influence,chen2017towards,ward2011criteria,swaroop1996string}. Despite notable advances, full automation is many years away \citep{mahmassani201650th, zhou2020stabilizing}. Human drivers remain an integral part of the control loop, giving rise to human factors issues such as trust and willingness to rely on automation. Partial automation (e.g., vehicles equipped with Adaptive Cruise Control, ACC), in which the automated system and the human driver share control of the vehicle, introduces the critical concept of \textit{control transition}. This refers to the transfer of some or all driving tasks between the automated system and the human driver, often triggered by changes in traffic conditions or the driver’s mental or physical state \citep{eriksson2017takeover, mcdonald2019toward}. Control transitions can be classified as either \textit{voluntary} or \textit{non-voluntary}, in terms of who initiates the transition \citep{zhong2025human,gershon2021driver}.  A voluntary takeover occurs when the driver decides to intervene and assume control, influenced by their judgment of the situation or level of trust in the system \citep{eriksson2017takeover,ma2021drivers,bellem2018comfort}. In contrast, a non-voluntary takeover is initiated by the automated system when conditions exceed its operational design domain, requiring the driver to promptly resume control \citep{zhang2019determinants}.

Control transitions can also be categorized by the direction of the control shift: \textit{upward} from manual control to automation, and \textit{downward} for vice versa \cite{correa2018management}. By the nature of the interventions, control transitions accompany sudden changes in car-following behavior (e.g., difference in equilibrium spacing). These sudden changes are highly likely to trigger traffic disturbances \citep{chen2012behavioral, zhong2025human}. This is problematic because once formed, traffic disturbances often grow to full-fledged stop-and-go traffic, undermining traffic throughput and stability \citep{del2001propagation,mauch2002freeway, ahn2007freeway,knoop2008capacity, sugiyama2008traffic,laval2010mechanism, li2011characterization,chen2012behavioral,chen2014periodicity,laval2014parsimonious,oh2015impact, li2024disturbances}. Moreover, when disturbances get amplified along a string of vehicles, they may cause following partially automated vehicles to accordingly switch from automated to manual control (i.e., downward transition). This transition can trigger similar responses in following vehicles, creating a chain reaction or "cascading effect" that spreads through the traffic flow and further reduces overall throughput and stability.

Mixed traffic throughput has been extensively studied in the literature \citep{chen2017towards, ghiasi2017mixed, mohajerpoor2019mixed, qin2025markov, yue2023markov}. \citet{chen2017towards} proposed a theoretical formulation of mixed traffic throughput consisting of automated vehicles (AVs) and human-driven vehicles (HDVs) under equilibrium conditions. \citet{ghiasi2017mixed} developed an analytical stochastic model based on Markov chains to estimate the mixed traffic capacity of HDVs and connected automated vehicles (CAVs), taking into account CAV penetration rate, platoon intensity, and headway settings. Building on this, \citet{mohajerpoor2019mixed} considered the arrangement of AVs and HDVs in mixed traffic to analytically derive the lowest and highest achievable headways and their variability. However, these studies primarily focus on steady-state throughput analysis and overlook critical transient behaviors. Moreover, they typically model scenarios involving fully AVs or idealized CAV platoons, which are not yet common on highways. In practice, partial automation is far more prevalent. A key factor missing in the literature is the treatment of control mode transitions between automated and human-driven modes, along with the cascading effects caused by the chain reactions these transitions may trigger across vehicles. This oversight may lead to overly optimistic assessments of mixed traffic performance.

To address these limitations, this study formulates mixed traffic throughput by capturing control transitions in partial automation systems and their cascading effects. The mixed traffic considered in this study includes both permanent HDVs (i.e., vehicles that are always manually driven) and partially automated vehicles (i.e., vehicles that can switch between automated and manual driving modes). Specifically, we propose a semi-Markov chain framework to model control transitions. Unlike standard Markov models, the semi-Markov approach accounts for deterministic ``lockout” periods, during which vehicles are restricted from switching modes, by allowing transition probabilities to depend not only on the current state but also on the time elapsed in that state. This capability makes the model particularly suited for capturing the time-dependent behavior of control transitions. In addition, we introduce leader-dependent transitions, where the transition probabilities of a following vehicle are conditioned on the mode of its leading vehicle (AV or HDV). This mechanism essentially captures the cascading effects—how control transitions can trigger disturbances that are amplified and affect traffic throughput over time. Building on the proposed leader-dependent semi-Markov system, we conduct an analysis of its fundamental properties, including the existence of equilibrium and system stability. Finally, we conduct numerical experiments to demonstrate the cascading effect of mode transitions as reflected in traffic throughput, and perform a sensitivity analysis on several key factors, including the proportion of permanent HDVs in the traffic flow and the initial mode ratio between HDVs and AVs among partially automated vehicles.

\section{Problem Statement and Assumptions}
\vspace{-6pt}
\label{assumption}
In this framework, we model mixed traffic comprising conventional human-driven vehicles (HDVs) and partially automated vehicles (PAVs), reflecting today’s deployment. Currently, most on-road vehicles with automation functions at SAE Level 2, with a few systems nearing Level 3; fully autonomous Level 4 and 5 vehicles remain exceedingly rare \citep{on2021taxonomy}.  A fraction \(\gamma\) of the vehicles are permanent HDVs, vehicles without any automation functionality, while the remaining fraction \(1-\gamma\) comprises PAVs that can switch between AV and HDV modes. The transition dynamics of PAVs are influenced by the driving behavior and mode of the preceding vehicle (leader-dependent transition).  This dependency can induce cascading effects, whereby disturbances or mode transitions originating from one vehicle propagate downstream, potentially amplifying traffic fluctuations throughout the system, as show in Figure \ref{fig:illustration}. 

Mode transitions of PAV require a deliberate verification process rather than an immediate switch \citep{on2021taxonomy,iso2019pas}. For instance, when transitioning from AV to HDV mode, the vehicle remains in its automated state for several seconds while the system verifies that the driver is attentive and ready to assume control. This transition delay ensures that all safety criteria are met before handing over control to the human driver. Empirical findings \citep{eriksson2017takeover, zhang2019determinants} indicate that drivers typically need three to five seconds to regain situational awareness. Likewise, when moving from HDV to AV mode, the system should assess whether the surrounding environment meets the operational design domain criteria, including verifying acceptable traffic density and sensor clarity to ensure that automation can be safely engaged.  As a result of these transition delays (aka lockout periods), the transition process between control states cannot be accurately represented by a conventional Markov chain \citep{ghiasi2017mixed, qin2025markov, yue2023markov}. Markov models assume memoryless transitions, where the next state depends only on the current state, not on the time spent in it. However, in partial automation, mode switching involves a transition state with a lockout period to satisfy safety and verification requirements. Since the duration spent in this transition state directly determines when a switch can occur, the process is inherently time-dependent and violates the Markov assumption. To address this limitation, we adopt a semi-Markov modeling framework, which explicitly incorporates dwell time in the transition state and better reflects the temporal structure of mode switching in real-world driving. The modeling details are provided in Section~\ref{section:method}.

\begin{figure}[!ht]\centering
    \includegraphics[width=0.9\textwidth]{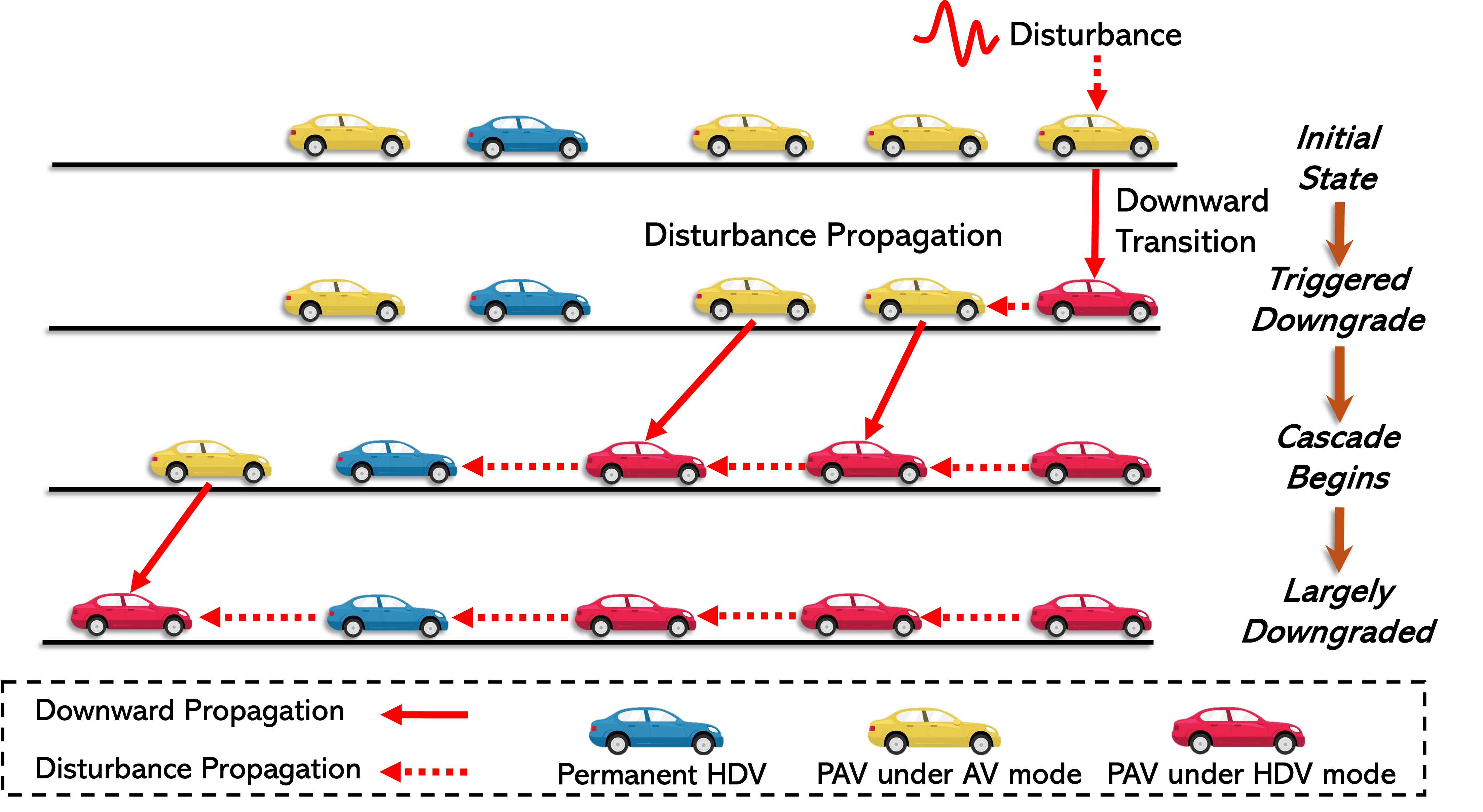}
    \vspace{-5pt} 
    \caption{Downward Mode Transitions (AV mode to HDV mode) and cascading effects in mixed traffic}
    \label{fig:illustration}
\end{figure}
\vspace{-6pt}
The objective of this study is to model the fractions of AVs and HDVs among PAVs over time and compute the resulting traffic throughput, which accounts for both PAVs and permanently assigned HDVs. The analysis incorporates distinct headway requirements for AVs, HDVs, and transitional headway. To formalize the mathematical framework and clearly define the model’s scope, we make the following key assumptions:

\begin{itemize}    
\item \textbf{Leader-Dependent Switching:} The transition of a PAV is modeled as a continuous-time process. Instead of using fixed probabilities as in discrete-time approaches, we characterize transitions by rates \citep{anderson2012continuous}. In the real world, a PAV may switch modes in response to changes in traffic density or the behavior of the leader vehicle. In this study, we focus on the aggregate transition behavior rather than on detailed local traffic conditions that may affect individual transitions. Specifically, rather than modeling how each PAV makes decisions based on detailed local conditions (e.g., speed, headway, or acceleration of the leader), we adopt an approach that captures aggregate transition dynamics using state-dependent rates based on the leader’s status (HDV or AV). If the leader is an HDV, the PAV transitions from HDV to AV mode at rate \(\lambda_1\) and from AV to HDV mode at rate \(\lambda_2\). Conversely, if the leader is an AV, the corresponding transition rates are \(\lambda_3\) (HDV to AV) and \(\lambda_4\) (AV to HDV). 
\item \textbf{Lockout Period:} Before a mode transition occurs, a PAV is required to remain in its current mode for a fixed duration, denoted by \(T_{\text{lock}}\) (or more specifically, \(T^{H}_{\text{lock}}\) for transitions from HDV to AV and \(T^{A}_{\text{lock}}\) for transitions from AV to HDV). Physically, this period allows the vehicle’s control system to verify that safety criteria are met, such as confirming the driver’s alertness or assessing the surrounding environment, before switching modes. This safety verification period also prevents rapid, successive transitions that could lead to driver mode confusion or unsafe operation \citep{colley2022effects}. 
    \item \textbf{Large Population Assumption:} For a sufficiently large number of vehicles, the probability that a given vehicle's leader is an HDV or an AV can be approximated by the overall proportions of HDVs and AVs in the traffic stream.
\end{itemize}

\section{Leader-Dependent Semi-Markov Mixed Traffic System}
\label{section:method}
\vspace{-6pt}
In this section, we develop a semi-Markov modeling framework for a leader-dependent mixed traffic system that incorporates deterministic lockout periods. This framework captures the non-exponential nature of mode transitions, such as the lockout time required to verify safety conditions before switching between manual and automated control. However, the lack of memorylessness in semi-Markov processes poses analytical challenges. To address this, we approximate the deterministic lockout period using a phase-type distribution, enabling the semi-Markov formulation to be converted into a Markov-compatible structure. This approach is more amenable to theoretical analysis while still preserving the essential timing characteristics of the lockout period.

\subsection{State Dynamics}
A central challenge in modeling partial automation lies in representing the lockout period, \(T_{\text{lock}}\), during which a vehicle remains in a transition state and is temporarily prevented from switching modes. This fixed delay violates the memoryless property of standard Markov models, which assume exponentially distributed sojourn times. Because the probability of transitioning depends on the time already spent in the state, the system exhibits semi-Markov behavior. To reconcile this time dependency with a Markovian framework, we approximate the lockout period using a phase-type distribution.

\begin{theorem}[Phase-Type Distributions, \cite{david1987least}]\label{thm:phase-type}
Let \(F\) be any cumulative distribution function on \([0,\infty)\) with finite mean. Then, for every \(\varepsilon > 0\), there exists a phase-type distribution \(F_{\mathrm{PH}}\) such that
\begin{equation}\label{eq:phase-type}
\sup_{t\ge 0} \left|F(t) - F_{\mathrm{PH}}(t)\right| \leq \varepsilon.
\end{equation}
\end{theorem}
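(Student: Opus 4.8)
The plan is to prove the uniform (Kolmogorov-distance) bound \eqref{eq:phase-type} in two stages: first establish \emph{weak} density of the phase-type family in the distributions on $[0,\infty)$, and then \emph{upgrade} that weak convergence to the supremum estimate using the monotone structure of cumulative distribution functions. For the first stage I would exploit the closure of the phase-type class under finite mixtures, convolutions, and scaling. The elementary building block is the Erlang law $E_n(\lambda)$, an $n$-fold convolution of $\mathrm{Exp}(\lambda)$ and hence phase-type with $n$ phases; choosing $\lambda = n/x$ gives mean $x$ and variance $x^2/n \to 0$, so $E_n(n/x) \Rightarrow \delta_x$ weakly. Given $F$, I would truncate to a compact window using $F(\infty)=1$ (and the finite-mean hypothesis to control the tail contribution), approximate the truncated law weakly by a finitely supported $\sum_i w_i \delta_{x_i}$, and replace each atom $\delta_{x_i}$ by a sharply concentrated Erlang. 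Since mixtures of phase-type laws are phase-type, $F_{\mathrm{PH}}^{(n)} = \sum_i w_i\, E_n(n/x_i)$ is phase-type and converges weakly to $F$.

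The second stage is the crux of the \emph{uniform} claim, and it is exactly what a weak-convergence argument alone cannot supply. Weak convergence only yields $F_{\mathrm{PH}}^{(n)}(t) \to F(t)$ at continuity points of $F$. To convert this into control of $\sup_{t\ge 0}|F_{\mathrm{PH}}^{(n)}(t)-F(t)|$ I would invoke P\'olya's theorem: if the limit $F$ is continuous, weak convergence forces uniform convergence. The mechanism is a monotonicity argument — choose quantile points $x_1<\dots<x_K$ with $F(x_j)-F(x_{j-1})\le \varepsilon/2$ (possible by continuity of $F$ via the intermediate value theorem), bound the error at these finitely many points by pointwise convergence, and interpolate between them using the monotonicity of both $F$ and $F_{\mathrm{PH}}^{(n)}$; combined with the tail truncation this yields $\sup_{t\ge 0}|F_{\mathrm{PH}}^{(n)}(t)-F(t)|\le \varepsilon$ for $n$ large.

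The main obstacle — and the step that pins down the true scope of \eqref{eq:phase-type} — is precisely the continuity hypothesis that P\'olya's theorem needs. Every phase-type cumulative distribution function is continuous (indeed real-analytic) on $(0,\infty)$ and may carry an atom only at the origin, through the probability of starting already in the absorbing phase. Hence if $F$ has a jump of size $p$ at some $t_0>0$, then for \emph{any} phase-type $G$, continuity of $G$ at $t_0$ gives $\sup_{t\ge 0}|F(t)-G(t)| \ge \max\{|F(t_0^-)-G(t_0)|,\,|F(t_0)-G(t_0)|\} \ge p/2$, and this bound is tight. The interior-atom case therefore cannot be driven below half the jump, so the uniform estimate \eqref{eq:phase-type} is attainable for arbitrarily small $\varepsilon$ exactly when $F$ is continuous on $(0,\infty)$; continuity of $F$ is the natural and essentially necessary hypothesis behind the uniform form, and the two-stage Erlangization-plus-P\'olya argument above establishes \eqref{eq:phase-type} for every such $F$.

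Finally I would record how this interacts with the intended application, the deterministic lockout $F=\mathbbm{1}_{\{t\ge T_{\text{lock}}\}}$, whose atom sits at $T_{\text{lock}}>0$ — precisely the boundary case above. Here the Erlang approximants $E_n(n/T_{\text{lock}})$ still concentrate at $T_{\text{lock}}$ and the sup-error vanishes at every $t\ne T_{\text{lock}}$, while at $t=T_{\text{lock}}$ it stabilizes at the unavoidable value $1/2$. The honest reading of \eqref{eq:phase-type} for the lockout is thus uniform on any set bounded away from $T_{\text{lock}}$ (with the error concentrated in a shrinking neighborhood of the switching time), which is exactly the regime in which the phase-type surrogate is used to render the semi-Markov dynamics Markov-compatible in the sequel.
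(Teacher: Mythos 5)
The paper contains no proof of Theorem~\ref{thm:phase-type}; it is imported as a citation, so the only in-paper argument to compare against is Proposition~\ref{prop:erlang_design}, which merely matches the Erlang mean to $T_{\text{lock}}$ and notes that the variance $T_{\text{lock}}^2/k$ vanishes. Your two-stage proof --- weak approximation of $F$ by a finite mixture of sharply concentrated Erlangs (using closure of the phase-type class under mixtures), followed by P\'olya's theorem to upgrade weak convergence to the Kolmogorov metric --- is the standard and correct route for continuous $F$. More importantly, your obstruction analysis is right and exposes a genuine defect in the statement as printed: every phase-type CDF is continuous on $(0,\infty)$ (an atom is possible only at the origin), so if $F$ jumps by $p$ at some $t_0>0$, then $\sup_{t\ge 0}|F(t)-F_{\mathrm{PH}}(t)|\ge p/2$ for \emph{every} phase-type $F_{\mathrm{PH}}$, and \eqref{eq:phase-type} is unattainable for $\varepsilon<p/2$. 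The uniform form of the theorem is therefore false in general; the correct general statement is density in the weak (L\'evy-metric) topology.

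This is not a pedantic restriction, because the one distribution the paper feeds into the theorem is exactly the worst case of your obstruction: the deterministic lockout, a unit jump at $T_{\text{lock}}>0$. The Erlang-$k$ law of Proposition~\ref{prop:erlang_design} converges to it only weakly; its CDF evaluated at $T_{\text{lock}}$ tends to $1/2$ by the central limit theorem, so the sup-error stabilizes at $1/2$ no matter how large $k$ is --- precisely your computation. The paper's mean/variance argument proves weak convergence and nothing more, which matches your corrected reading: uniform approximation holds only on sets bounded away from $T_{\text{lock}}$, with the error confined to a shrinking neighborhood of the switch time. That weaker statement is all the downstream development actually uses (the generator construction and the throughput integrals weight state fractions; they never evaluate the lockout CDF at its jump point), so the paper's results survive, but Theorem~\ref{thm:phase-type} should be restated in weak-convergence form. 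Incidentally, the cited source --- judging by its key, the result that Erlang is the \emph{least variable} phase-type law with $k$ phases, i.e.\ squared coefficient of variation at least $1/k$ --- is an optimality result that corroborates your point (no fixed-phase family can be uniformly close to a point mass) rather than a proof of the density claim. In short: your proof is correct, takes the route the paper omitted, and your scope restriction is the correction the statement needs.
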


This theorem guarantees that any sojourn-time distribution, including a deterministic duration, can be approximated arbitrarily well by a Markov-compatible phase-type distribution. 

\begin{proposition}
\label{prop:erlang_design}
Given a deterministic lockout duration \(T_{\text{lock}}\), the Erlang-\(k\) distribution with rate \(\mu = k / T_{\text{lock}}\) approximates the lockout period with decreasing variance as \(k\) increases. There exists a sufficiently large \(k\) such that the resulting distribution approximates the lockout duration within any desired error \(\varepsilon > 0\).
\end{proposition}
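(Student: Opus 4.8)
The plan is to exploit the fact that the Erlang-$k$ law is precisely the $k$-fold convolution of independent exponentials, i.e.\ the waiting time $X_k = \sum_{i=1}^{k} Y_i$ with $Y_i \sim \mathrm{Exp}(\mu)$ i.i.d., which is itself a phase-type distribution (a tandem of $k$ identical exponential phases, hence covered by Theorem~\ref{thm:phase-type}). First I would record the two elementary moment identities $\mathbb{E}[X_k] = k/\mu$ and $\mathrm{Var}(X_k) = k/\mu^2$. Substituting the prescribed rate $\mu = k/T_{\text{lock}}$ gives $\mathbb{E}[X_k] = T_{\text{lock}}$ for every $k$ and $\mathrm{Var}(X_k) = T_{\text{lock}}^2/k$. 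This immediately yields the first claim: the mean is pinned exactly at the target lockout duration, while the variance is strictly monotone decreasing in $k$ and tends to $0$ as $k \to \infty$.

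For the approximation claim I would convert the vanishing variance into a concentration statement via Chebyshev's inequality: for any tolerance $\delta > 0$,
\[
\Pr\bigl(|X_k - T_{\text{lock}}| \ge \delta\bigr) \;\le\; \frac{\mathrm{Var}(X_k)}{\delta^2} \;=\; \frac{T_{\text{lock}}^2}{k\,\delta^2}.
\]
Hence, given any $\varepsilon > 0$, choosing $k > T_{\text{lock}}^2/(\varepsilon\,\delta^2)$ forces this probability below $\varepsilon$, so $X_k \to T_{\text{lock}}$ in probability, which—because the limit is a constant—is equivalent to convergence in distribution. This exhibits the Erlang family as an explicit, constructive phase-type realization achieving the abstract guarantee of Theorem~\ref{thm:phase-type}, together with an explicit lower bound on the required $k$.

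The delicate point, and the one I would be most careful to state honestly, is the sense in which the Erlang law ``approximates the lockout duration within error $\varepsilon$.'' The deterministic lockout corresponds to the degenerate law with step CDF $F(t)=\mathbbm{1}\{t \ge T_{\text{lock}}\}$, which jumps by $1$ at $T_{\text{lock}}$; since every Erlang CDF is continuous and, by the central limit theorem, $F_k(T_{\text{lock}}) \to \tfrac12$, the supremum distance $\sup_{t}|F(t)-F_k(t)|$ cannot be driven below $\tfrac12$ at the jump. I would therefore not assert sup-norm closeness on all of $[0,\infty)$; instead I would frame the result as weak convergence / convergence in probability (the Chebyshev bound above), which is exactly the mode in which phase-type families are dense and is the physically meaningful one for a lockout timer. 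Equivalently, one can state uniform closeness of the CDFs on the complement $[0,\infty)\setminus(T_{\text{lock}}-\delta,\,T_{\text{lock}}+\delta)$ of any neighborhood of the switching instant. The remaining steps are routine moment algebra; the only genuine obstacle is reconciling the continuous phase-type approximant with the jump discontinuity of the target, which the Chebyshev/weak-convergence framing resolves cleanly.
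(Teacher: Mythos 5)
Your argument follows essentially the same route as the paper's own proof: identify the Erlang-$k$ law as a $k$-stage tandem of exponentials, match the mean via $\mu = k/T_{\text{lock}}$, and observe that $\mathrm{Var}(T) = T_{\text{lock}}^2/k \to 0$. The paper stops there, asserting without further justification that the Erlang distribution ``converges to a deterministic lockout time'' as $k\to\infty$. Your two additions are genuine improvements rather than a different approach. First, the Chebyshev bound turns the vanishing variance into an explicit concentration estimate with a computable lower bound on $k$, which the paper never supplies. Second, and more importantly, your caveat about the mode of convergence is correct and exposes a looseness in the paper itself: the target CDF $\mathbbm{1}\{t\ge T_{\text{lock}}\}$ has a unit jump, every Erlang CDF is continuous with $F_k(T_{\text{lock}})\to \tfrac12$, so the Kolmogorov distance is bounded below by about $\tfrac12$ and the sup-norm guarantee of the paper's Theorem~\ref{thm:phase-type} cannot apply literally to this degenerate target. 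The proposition's phrase ``within any desired error $\varepsilon$'' is only defensible in the sense of weak convergence (equivalently, convergence in probability to the constant $T_{\text{lock}}$), which is exactly the framing you adopt and is the physically relevant one for the lockout timer. In short: same construction, but your version is the one that actually closes the argument.
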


\begin{proof}

In this study, we use an Erlang-\(k\) distribution, defined as the sum of \(k\) identical exponential stages with rate \(\mu\), to approximate the fixed lockout duration in a Markov-compatible form \citep{younes2004solving}. Let \(T\) be the random variable denoting the total time required to complete all \(k\) exponential stages, each with rate \(\mu\). That is, \(T\) represents the stochastic duration of the approximated lockout period.  By setting the expected value of \(T\) to match the deterministic lockout time, the rate \(\mu\) can be determined accordingly.
\begin{equation}
\mathbb{E}[T] = \frac{k}{\mu} = T_{\text{lock}} 
\quad \Rightarrow \quad 
\mu = \frac{k}{T_{\text{lock}}}
\label{eq:mu_expression}
\end{equation}
The corresponding variance of \(T\) is
\begin{equation}
\mathrm{Var}(T) = \frac{k}{\mu^2} = \frac{T_{\text{lock}}^2}{k}
\label{eq:erlang_var}
\end{equation}
which decreases as \(k\) increases, ensuring the distribution to become more tightly concentrated around its mean. As \(k \to \infty\), the Erlang distribution converges to a deterministic lockout time.
\end{proof}

\begin{remark}
A key strength of the phase‐type framework is its flexibility. It is dense in the set of all positive‐valued sojourn‐time distributions. This means that any desired sojourn‐time distribution, including those exhibiting stochastic variability, can be approximated arbitrarily well using phase‐type representations. When the sojourn time is stochastic, additional variability or over‐dispersion may need to be captured beyond what an Erlang distribution offers. In such cases, the hyper-exponential distributions can be considered \citep{bladt2005review}.
\end{remark}

In our model, we focus on PAVs because of their ability to switch between modes, while a fraction \(\gamma\) of vehicles are permanent HDVs that do not switch modes. Following Proposition~\ref{prop:erlang_design}, we introduce \(k\) intermediate states representing sequential exponential stages, allowing the total sojourn time to closely approximate the deterministic lockout time. As shown in Figure~\ref{fig:k-erlang-explain}, the resulting Markov process includes additional states, preserving the Markovian structure while capturing the lockout delay.

\begin{figure}[!ht]\centering
    \includegraphics[width=0.68\textwidth]{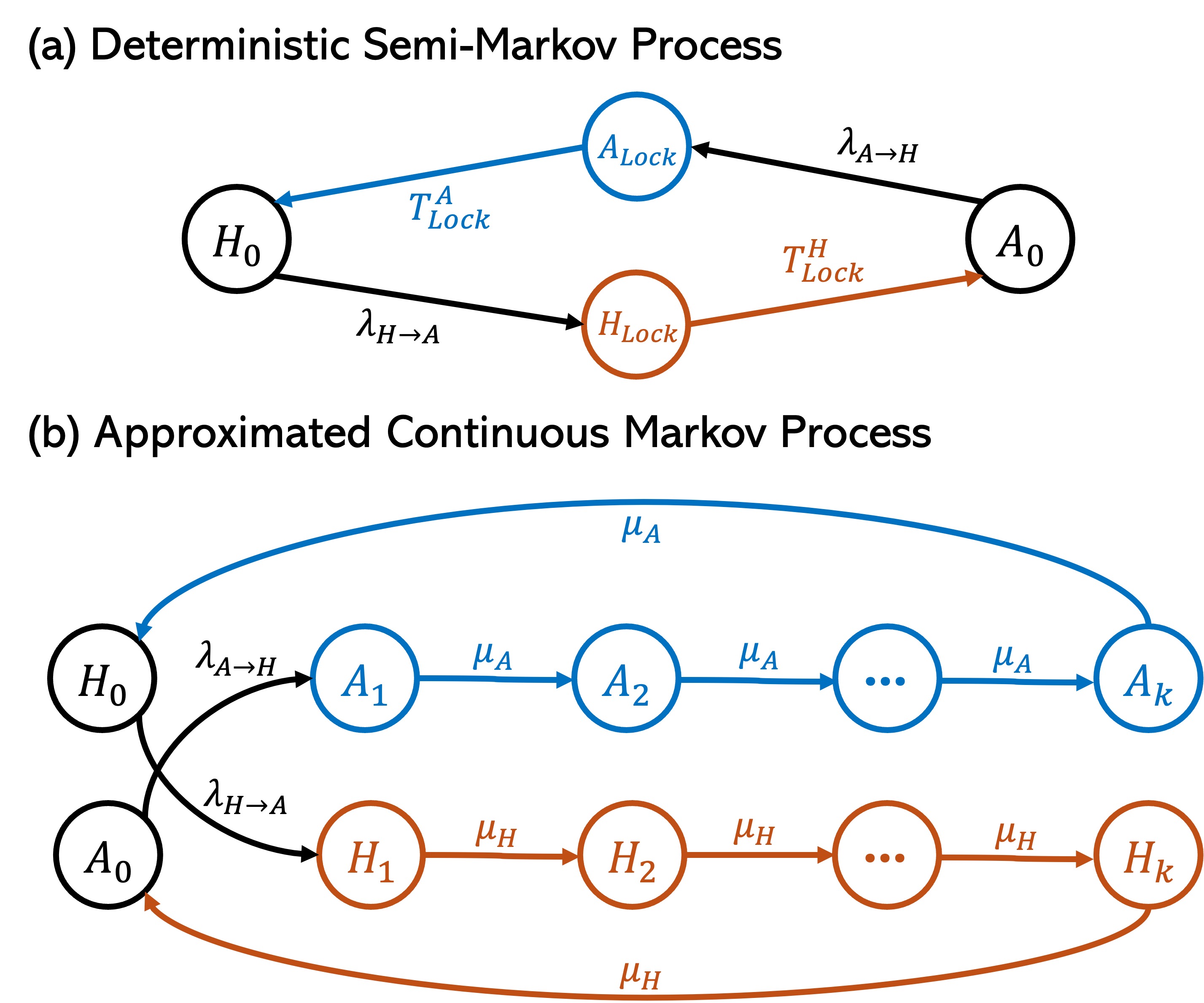}
    \vspace{-5pt} 
    \caption{PAV state transition diagram}
    \label{fig:k-erlang-explain}
\end{figure}
\vspace{-6pt}

We define the system state as 
\begin{equation}\label{eq:state_vector}
x(t) = 
\begin{bmatrix} 
x_{H_0}(t) & x_{H_1}(t) & \cdots & x_{H_k}(t) & x_{A_0}(t) & x_{A_1}(t) & \cdots & x_{A_k}(t)
\end{bmatrix}^T
\end{equation}

where, \(x_{H_i}(t)\) and \(x_{A_i}(t)\) represent the fractions of PAVs in HDV mode and AV mode, respectively, at state \(i\) and time \(t\), where \(i = 0, 1, \dots, k\). When \(i = 0\), the PAV can switch modes. When \(i \neq 0\), it is in the lockout period and need to go through \(k\) steps. When it reaches \(i = k\), the lockout is over and the mode transition is finished. These fractions satisfy the normalization condition, as they collectively represent all PAVs in the system. At any given time \(t\), each PAV occupies exactly one state, ensuring that the total sum of all fractions equals one.

\begin{equation}\label{eq:normalization}
    \sum_{i=0}^{k} x_{H_i}(t) + \sum_{i=0}^{k} x_{A_i}(t) = 1,
\end{equation}

The system dynamics are governed by a generator matrix \(\mathbf{A}\), such that
\begin{equation}\label{eq:state_dynamics}
    \frac{d}{dt}x(t) = \mathbf{A}(t)x(t).
\end{equation}
Because the state vector is naturally partitioned into two groups (HDV and AV mode), the matrix \(\mathbf{A}(t)\) can be expressed in block-matrix form as follows:
\begin{equation}\label{mat_A}
\mathbf{A}(t) = \begin{bmatrix} A_{HH}(t) & A_{HA}(t) \\[1mm] A_{AH}(t) & A_{AA}(t) \end{bmatrix},
\end{equation}
Take upward transition (HDV to AV mode) in Figure~\ref{fig:k-erlang-explain} as example, before transiting to AV mode, the PAV under HDV mode should pass through an intermediate locked HDV state to verify that the surrounding driving environment is safe to activate AV mode.  In our model, the unlocked HDV state \(H_0\) exhibits an outflow at rate \(\lambda_{H\to A}\) to the first locked HDV state \(H_1\), which means that the PAV initiates the transition process toward AV mode. \(H_0\) also receives an inflow from the terminal locked AV state \(A_k\) at a rate of \(\mu_A\) (i.e., this inflow originates from the downward transition, AV to HDV mode).  Each intermediate locked HDV state (\(H_1\) to \(H_k\)) has equal inflow and outflow at rate \(\mu_H = k / T^H_{\text{lock}}\), ensuring balanced progression through the lockout stages. The same structure applies to the AV states in downward transitions.

\begin{subequations}
\begin{equation}\label{mat_1}
A_{HH}=
\begin{pmatrix}
-\lambda_{H\to A} & 0 & 0 & \cdots & 0\\[1mm]
\lambda_{H\to A}  & -\mu_H    & 0 & \cdots & 0\\[1mm]
0                 & \mu_H     & -\mu_H & \cdots & 0\\[1mm]
\vdots            & \vdots    & \ddots & \ddots & \vdots\\[1mm]
0                 & 0         & \cdots & \mu_H  & -\mu_H
\end{pmatrix}
\qquad
A_{HA}=
\begin{pmatrix}
0 & 0 & 0 & \cdots & \mu_A\\[1mm]
0 & 0 & 0 & \cdots & 0\\[1mm]
\vdots & \vdots & \ddots & \ddots & \vdots\\[1mm]
0 & 0 & 0 & \cdots & 0
\end{pmatrix}
\end{equation}

\begin{equation}\label{mat_2}
A_{AH}=
\begin{pmatrix}
0 & 0 & 0 & \cdots & \mu_H\\[1mm]
0 & 0 & 0 & \cdots & 0\\[1mm]
\vdots & \vdots & \ddots & \ddots & \vdots\\[1mm]
0 & 0 & 0 & \cdots & 0
\end{pmatrix}
\qquad
A_{AA}=
\begin{pmatrix}
-\lambda_{A\to H} & 0 & 0 & \cdots & 0\\[1mm]
\lambda_{A\to H}  & -\mu_A    & 0 & \cdots & 0\\[1mm]
0                 & \mu_A     & -\mu_A & \cdots & 0\\[1mm]
\vdots            & \vdots    & \ddots & \ddots & \vdots\\[1mm]
0                 & 0         & \cdots & \mu_A  & -\mu_A
\end{pmatrix}
\end{equation}
\end{subequations}

Thus far, the generator matrix \(\mathbf{A(t)}\) has been expressed in terms of the effective transition rates \(\lambda_{H\to A}(t)\) and \(\lambda_{A\to H}(t)\).  In our formulation, these rates are defined conditionally based on the type of leader a vehicle has. Specifically, the effective rate is calculated as a weighted average of two base rates, where the base rates represent the transition rates under a specific leader type. For example, if a vehicle's leader is an HDV, the transition from HDV to AV occurs at rate \(\lambda_1\). If the leader is an AV, the transition from HDV to AV occurs at rate \(\lambda_3\). The weights are given by the probabilities that the leader is an HDV or an AV, respectively. 
\begin{subequations}\label{h-a&a-h}
\begin{equation}
\lambda_{H\to A}(t) = q_{\mathrm{HDV}}(t)\,\lambda_1 + q_{\mathrm{AV}}(t)\,\lambda_3,
\end{equation}
\begin{equation}
\lambda_{A\to H}(t) = q_{\mathrm{HDV}}(t)\,\lambda_2 + q_{\mathrm{AV}}(t)\,\lambda_4.
\end{equation}
\end{subequations}

In the overall traffic flow, a fraction \(\gamma\) of vehicles are permanent HDVs, while the remaining fraction \(1 - \gamma\) are PAVs whose operating modes may vary over time. Since the PAVs are distributed among the states corresponding to HDV mode (i.e., \(H_0, H_1, \dots, H_k\)) and AV mode (i.e., \(A_0, A_1, \dots, A_k\)), the probability that a vehicle's leader is an HDV at time \(t\) is given by
\begin{equation}
q_{\mathrm{HDV}}(t) = \gamma + (1-\gamma)\sum_{i=0}^{k} x_{H_i}(t),
\end{equation}
and similarly, the probability that the leader is an AV is
\begin{equation}
q_{\mathrm{AV}}(t) = (1-\gamma)\sum_{i=0}^{k} x_{A_i}(t).
\end{equation}

Since the transition rates depend on these probabilities, which are functions of the system state, the generator matrix \(\mathbf{A}\) becomes a function of the state vector, denoted as \(\mathbf{A}(x(t))\). Therefore, the system we propose is a nonlinear system, described by:
\begin{equation}\label{eq:nonlinear_state_dynamics}
    \frac{d}{dt}x(t) = \mathbf{A}(x(t))\,x(t).
\end{equation}

By further rearranging the generator matrix, we can represent the system as a piecewise affine bilinear system. In our setting, this refers to dynamics that are linear in the state vector but weighted by state-dependent probabilities \(q_{\mathrm{HDV}}(x(t))\) and \(q_{\mathrm{AV}}(x(t))\), making the overall system nonlinear. This rearrangement decomposes the nonlinear state dependence into an affine combination of constant matrices, clarifying the influence of each operating mode on the overall dynamics.
\begin{equation}
\frac{d}{dt}x(t) = \Bigl[\,\mathbf{A}_0 + q_{\mathrm{HDV}}(x(t))\,\mathbf{A}_1 + q_{\mathrm{AV}}(x(t))\,\mathbf{A}_2 \Bigr]\,x(t),
\end{equation}
where the constant matrix \(\mathbf{A}_0\) collects the state-independent dynamics. Define as
\begin{equation}\label{eq:A0}
\mathbf{A}_{0} = \begin{bmatrix}
A_{HH}^{(0)} & A_{HA}\\[2mm]
A_{AH}   & A_{AA}^{(0)}
\end{bmatrix},
\end{equation}
with

\begin{equation}\label{eq:AHH0_AAA0}
\begin{aligned}
A_{HH}^{(0)} &= 
\begin{pmatrix}
0      & 0       & 0       & \cdots & 0      \\
0      & -\mu_H & 0       & \cdots & 0      \\
0      & \mu_H  & -\mu_H & \ddots & \vdots \\
\vdots & \vdots & \ddots  & \ddots & 0      \\
0      & 0      & \cdots  & \mu_H  & -\mu_H 
\end{pmatrix}, \qquad
A_{AA}^{(0)} =
\begin{pmatrix}
0      & 0       & 0       & \cdots & 0      \\
0      & -\mu_A & 0       & \cdots & 0      \\
0      & \mu_A  & -\mu_A & \ddots & \vdots \\
\vdots & \vdots & \ddots  & \ddots & 0      \\
0      & 0      & \cdots  & \mu_A  & -\mu_A 
\end{pmatrix}.
\end{aligned}
\end{equation}

Generator matrix \(\mathbf{A}_1\) captures the contribution to the transition dynamics when the leader is an HDV, as in Eq. \ref{h-a&a-h} (i.e., the base rate \(\lambda_1\) for HDV \(\to\) AV and \(\lambda_2\) for AV \(\to\) HDV). Specifically, we set
\begin{equation}\label{eq:A1}
\mathbf{A}_1 = \begin{bmatrix}
A_{HH}^{(1)} & 0\\[2mm]
0 & A_{AA}^{(1)}
\end{bmatrix},
\end{equation}
with
\begin{equation}\label{eq:AHH1_AAA1}
\begin{aligned}
A_{HH}^{(1)} = \begin{pmatrix}
-\lambda_1 & 0 & \cdots & 0\\[1mm]
\lambda_1  & 0 & \cdots & 0\\[1mm]
0         & 0 & \cdots & 0\\[1mm]
\vdots    & \vdots & \ddots & \vdots\\[1mm]
0         & 0 & \cdots & 0
\end{pmatrix},\quad
A_{AA}^{(1)} = \begin{pmatrix}
-\lambda_2 & 0 & \cdots & 0\\[1mm]
\lambda_2  & 0 & \cdots & 0\\[1mm]
0         & 0 & \cdots & 0\\[1mm]
\vdots    & \vdots & \ddots & \vdots\\[1mm]
0         & 0 & \cdots & 0
\end{pmatrix}
\end{aligned}
\end{equation}

Generator matrix \(\mathbf{A}_2\) captures the contribution when the leader is an AV (i.e., the base rate \(\lambda_3\) for HDV \(\to\) AV and \(\lambda_4\) for AV \(\to\) HDV). Define
\begin{equation}\label{eq:A2}
\mathbf{A}_2 = \begin{bmatrix}
A_{HH}^{(2)} & 0\\[2mm]
0 & A_{AA}^{(2)}
\end{bmatrix},
\end{equation}
with
\begin{equation}\label{eq:AHH2_AAA2}
\begin{aligned}
A_{HH}^{(2)} = \begin{pmatrix}
-\lambda_3 & 0 & \cdots & 0\\[1mm]
\lambda_3  & 0 & \cdots & 0\\[1mm]
0         & 0 & \cdots & 0\\[1mm]
\vdots    & \vdots & \ddots & \vdots\\[1mm]
0         & 0 & \cdots & 0
\end{pmatrix},\quad
A_{AA}^{(2)} = \begin{pmatrix}
-\lambda_4 & 0 & \cdots & 0\\[1mm]
\lambda_4  & 0 & \cdots & 0\\[1mm]
0         & 0 & \cdots & 0\\[1mm]
\vdots    & \vdots & \ddots & \vdots\\[1mm]
0         & 0 & \cdots & 0
\end{pmatrix}.
\end{aligned}
\end{equation}
Furthermore, by the fact that,
\begin{equation}
q_{\mathrm{HDV}}(t)+q_{\mathrm{AV}}(t)=1
\end{equation}
we can further organize representation of system dynamic:
\begin{equation}\label{finalsystem}
\frac{d}{dt}x(t) = \Bigl[\,\mathbf{A}_0 + q_{\mathrm{HDV}}(x(t))\,\mathbf{A}_1 + \Bigl(1 - q_{\mathrm{HDV}}(x(t))\Bigr)\,\mathbf{A}_2 \Bigr]\,x(t).
\end{equation}

\begin{remark}
By Eq.~\ref{finalsystem}, if the transition rates from AV to HDV and from HDV to AV are independent of the leader, then the factor \(q_{\mathrm{HDV}}(t)\) does not affect the transition dynamics. In this case, the generator matrix \(\mathbf{A}\) is constant, and the system described by Eq.~\ref{eq:nonlinear_state_dynamics} reduces to a linear system, equivalent to a traditional continuous-time Markov chain, whose system properties analysis such as steady-state behavior and overall stability is much easier.
\end{remark}

\begin{remark}\label{AV-HDV}
A transition from AV mode to HDV mode may occur either voluntarily or involuntarily. In a voluntary takeover, the vehicle initiates a lockout period during which it remains in AV mode while verifying that the driver is ready to assume control—this behavior is captured in our model. In contrast, an involuntary takeover occurs under emergency conditions, and proceeds immediately without a lockout delay, as shown in Figure~\ref{fig:No_lock}. This special case is still represented by the proposed framework: as \(T_{\text{lock}}^A \to 0\), the corresponding transition rate satisfies \(\mu_A \to \infty\), according to Proposition~\ref{prop:erlang_design}.

\begin{figure}[!ht]\centering
    \includegraphics[width=0.65\textwidth]{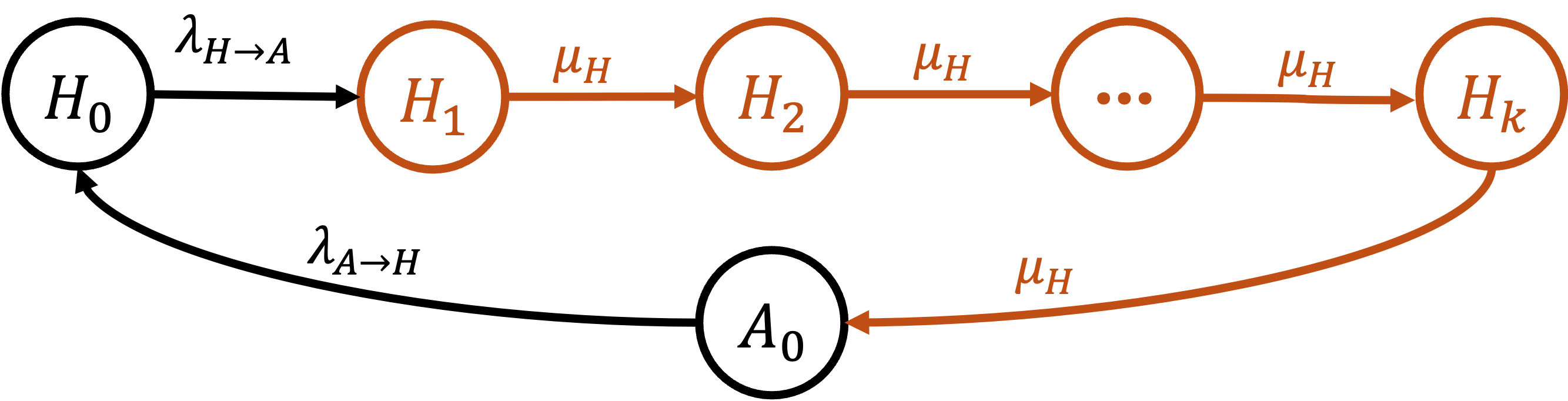}
    \vspace{-5pt} 
    \caption{PAV state transition diagram without downward transition lockout period}
    \label{fig:No_lock}
\end{figure}
\vspace{-6pt}
\end{remark}

\section{System Property Analysis}
\vspace{-6pt}
Building on the model presented above, this section analyzes system properties, focusing on the existence of steady-state solutions and the stability of the proposed dynamics. This analysis provides a comprehensive understanding of the mixed traffic system, particularly under leader-dependent mode transitions. Finally, by incorporating the fraction of vehicles in each state along with their respective headway values, we derive the time-varying traffic throughput. This derivation highlights how mode transitions among PAVs lead to disturbance propagation and throughput degradation.

\subsection{Steady State Analysis}
In traditional continuous Markov systems, the state vector is represented as a column vector and the generator matrix is Metzler, ensuring conservation and typically leading to a unique, globally attractive steady state \citep{norris1998markov}. In our system, the dynamics are more complex because the generator matrix depends on the state. It is formed by combining a base transition matrix with additional contributions that depend on whether the leading vehicle is in HDV or AV mode, as shown in Eq.\ref{finalsystem}. Since the generator matrix varies with the state, the system becomes both time-varying and nonlinear, making traditional eigenvalue-based stability analysis impractical \citep{sontag2013mathematical}. To address these challenges, we first apply Brouwer’s Fixed-Point Theorem to show that an equilibrium exists. Next, we exploit the conservation property to reduce the system and remove redundant system state. Finally, we use robust stability theory based on a common quadratic Lyapunov function to establish global exponential stability.

\subsubsection{Existence of Equilibrium}
\vspace{2pt}

\begin{theorem}[Brouwer’s Fixed-Point Theorem, \cite{brouwer1911abbildung}]\label{Brouwer}
Let \(T: X \to X\) be a continuous mapping on a nonempty compact convex set $X$. Then there exists at least one point $x^*\in X$ such that $T(x^*)=x^*$, that is, $T$ admits fixed points in $X$.
\end{theorem}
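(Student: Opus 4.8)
This final statement is the classical Brouwer Fixed-Point Theorem, cited here as an off-the-shelf tool; in this paper it will be invoked to produce an equilibrium of the map \(T(x) = x + \tau \mathbf{A}(x)x\) (or the associated one-step numerical flow) on the probability simplex. Since the authors cite \cite{brouwer1911abbildung} rather than derive it, the natural ``proof'' to anticipate is a reconstruction of one of its standard topological arguments, so I will sketch that. The first move is a reduction to a canonical domain: every nonempty compact convex set \(X \subset \mathbb{R}^n\) is homeomorphic to the standard simplex \(\Delta^m\) (equivalently a closed ball \(B^m\)) of the appropriate dimension \(m\) via some homeomorphism \(h\). Because \(h \circ T \circ h^{-1}\) is again continuous and self-mapping on \(\Delta^m\), any fixed point found there pulls back through \(h^{-1}\) to a fixed point of \(T\) on \(X\). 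Hence it suffices to prove the theorem for a continuous \(T : \Delta^m \to \Delta^m\), which is also exactly the form relevant to this paper since the state vector \(x(t)\) lives on a simplex by the normalization \(\sum_i x_{H_i} + \sum_i x_{A_i} = 1\).

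Next I would install the combinatorial engine, Sperner's Lemma, together with a labeling induced by \(T\) itself. For a point \(x \in \Delta^m\) with barycentric coordinates \((x_0,\dots,x_m)\), assign the label \(i\) for some index satisfying \(x_i > 0\) and \((Tx)_i \le x_i\). Such an index always exists: if instead \((Tx)_i > x_i\) held for every \(i\) with \(x_i>0\), then summing over those indices would give \(\sum_{i : x_i>0}(Tx)_i > \sum_{i : x_i>0} x_i = 1\), contradicting \(\sum_i (Tx)_i = 1\) with nonnegative entries. One checks this labeling respects the Sperner boundary condition, because the chosen label \(i\) has \(x_i>0\) and therefore lies on the carrying face. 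Then, refining \(\Delta^m\) through triangulations of mesh tending to zero, Sperner's Lemma guarantees a fully-labeled (``rainbow'') sub-simplex at each stage.

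The conclusion follows by compactness and continuity: the centroids of the rainbow sub-simplices have a convergent subsequence with limit \(x^*\), and since the vertices of each rainbow simplex carry all labels \(0,\dots,m\) and coalesce to \(x^*\), continuity of \(T\) forces \((Tx^*)_i \le x^*_i\) for every \(i\); summing and using that both are probability vectors upgrades each inequality to equality, so \(Tx^* = x^*\). The main obstacle is genuinely Sperner's Lemma, which carries the topological content that cannot be obtained by soft analysis alone, and which I would prove by induction on dimension via a parity/counting argument on fully-labeled codimension-one faces.

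As a backup route I would keep the no-retraction argument, which may be cleaner to cite: there is no continuous retraction from \(B^m\) onto its boundary sphere \(\partial B^m\). If \(T\) were fixed-point-free, then for each \(x\) the ray emanating from \(T(x)\) through \(x\) meets \(\partial B^m\) in a unique point, and this assignment is a continuous retraction, a contradiction. Here the difficulty merely relocates to proving the no-retraction statement, which needs either simplicial homology (using \(H_{m-1}(S^{m-1}) \ne 0\)) or Milnor's smooth-approximation and volume-polynomial argument; I would default to the Sperner route above as the most self-contained.
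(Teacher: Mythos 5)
The paper does not prove this statement at all: Brouwer's Fixed-Point Theorem is imported as a classical off-the-shelf result, cited to \cite{brouwer1911abbildung}, and the authors' actual work begins only afterwards (verifying that the simplex $\mathbf{X}$ is nonempty, compact, and convex in Remark~\ref{x-convex}, and that $T(x)=x+\alpha F(x)$ is a continuous self-map in Propositions~\ref{F_positive}--\ref{T_selfmap}, so that the theorem applies in Lemma~\ref{steady_state}). Your reconstruction via Sperner's Lemma is the standard and correct argument: the reduction of a nonempty compact convex set to the standard simplex by homeomorphism, the labeling $i \mapsto \{x_i>0 \text{ and } (Tx)_i\le x_i\}$ whose existence you justify correctly by the summation contradiction, the Sperner boundary condition, extraction of a limit of rainbow sub-simplices, and the upgrade of the coordinatewise inequalities $(Tx^*)_i\le x^*_i$ to equality using $\sum_i (Tx^*)_i=\sum_i x^*_i=1$ are all sound, as is the no-retraction alternative you keep in reserve. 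The only items left implicit are the proof of Sperner's Lemma itself (the parity induction you mention) and the degenerate case where the affine hull of $X$ has dimension zero, both routine. Since the paper treats the theorem as a citation, there is no authorial proof to diverge from; your sketch simply supplies the self-contained argument the paper chose to omit, and it is the one most commentators would give.
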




Our goal is to demonstrate that the fixed point guaranteed by Brouwer’s Fixed-Point Theorem corresponds to the equilibrium $x^*$ of our system, which also satisfies $\mathbf{A}(x^*)x^*=0$. First, we show that the feasible state space $\mathbf{X}$ is a non-empty compact convex set (see Remark \ref{x-convex}). Next, we define a mapping $T$ and show that it is a self-map that maps $\mathbf{X}$ into itself (Propositions \ref{F_positive} - \ref{T_selfmap}). Finally, we show that $T$ warrants the existence of a state $x^*\in\mathbf{X}$ such that $T(x^*) = x^*$, which further satisfies $\mathbf{A}(x^*)x^*=0$ (see Lemma \ref{steady_state}).

\begin{remark}\label{x-convex}
The state space is defined as
\begin{equation}
\label{x-range}
\mathbf{X} = \{ x \in \mathbb{R}^n \mid x_i \geq 0,\; \sum_{i=1}^{n} x_i = 1 \},
\end{equation}
which corresponds to the standard \((n-1)\)-simplex in $\mathbb{R}^n$. It is evidently non-empty. To establish compactness, note that each constraint $x_i\ge 0$ defines a closed half-space, and the equality $\sum_{i=1}^n x_i=1$ is a closed hyperplane. As $\mathbf{X}$ is the intersection of finitely many closed sets in $\mathbb{R}^n$, it is itself closed. Moreover, since $x_i\ge 0$ and $\sum_{i=1}^n x_i=1$, each coordinate $x_i\le 1$, implying that $\mathbf{X}$ is bounded. Hence, the closed bounded set $\mathbf{X}$ is compact. To verify convexity, let $x,y\in\mathbf{X}$ and $\lambda\in [0,1]$, then the convex combination of $x$ and $y$ is $z=\lambda x+(1-\lambda)y$. It is easy to check that each coordinate $z_i=\lambda x_i+(1-\lambda)y_i\ge 0$ and
\begin{equation}
    \sum_{i=1}^n z_i = \lambda\sum_{i=1}^nx_i + (1-\lambda)\sum_{i=1}^n y_i = \lambda+(1-\lambda)=1,
\end{equation}
which implies $z\in\mathbf{X}$. Therefore, $\mathbf{X}$ is a non-empty compact convex set.
\end{remark}


Next, we define a mapping $T$ and prove it is a self-map on $\mathbf{X}$.

\begin{proposition}\label{F_positive}
At time \(t\), for any state \(x(t) \in \mathbf{X}\), we define \(F(x(t)) = \dot{x}(t)\) and denote the \(i\)-th elements of \(F(x(t))\) and \(x(t)\) by \(F_i(x(t))\) and \(x_i(t)\), respectively. Then, if \(x_i(t) = 0\), it follows that \(F_i(x(t)) > 0\), and if \(x_i(t) = 1\), then \(F_i(x(t)) < 0\).
\end{proposition}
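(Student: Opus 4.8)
The plan is to evaluate \(F(x)=\mathbf{A}(x)\,x\) one coordinate at a time, exploiting that the generator in \eqref{mat_1}--\eqref{mat_2} encodes a single directed cycle \(H_0\to H_1\to\cdots\to H_k\to A_0\to A_1\to\cdots\to A_k\to H_0\). Each component therefore splits into exactly one outflow (its diagonal entry) and exactly one inflow from its unique predecessor \(\mathrm{pred}(i)\) in the cycle; for instance \(F_{H_0}(x)=-\lambda_{H\to A}(x)\,x_{H_0}+\mu_A\,x_{A_k}\) and \(F_{A_0}(x)=\mu_H\,x_{H_k}-\lambda_{A\to H}(x)\,x_{A_0}\). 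Every inflow coefficient is one of \(\mu_H,\mu_A,\lambda_{H\to A}(x),\lambda_{A\to H}(x)\), each strictly positive, since \(\mu_H,\mu_A>0\) and the two effective rates are convex combinations of the positive base rates \(\lambda_1,\dots,\lambda_4\). I treat the two boundary regimes separately.

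For \(x_i=1\), the simplex constraints \(\sum_j x_j=1\) and \(x_j\ge 0\) force \(x=e_i\), a vertex, so the inflow term vanishes and \(F_i(e_i)=A_{ii}(e_i)\) is just the diagonal rate at that vertex. I would evaluate it explicitly: at \(e_{H_0}\) one has \(q_{\mathrm{HDV}}=1\), giving \(A_{H_0H_0}=-\lambda_1\); at \(e_{A_0}\) one has \(q_{\mathrm{AV}}=1-\gamma\), giving \(A_{A_0A_0}=-(\gamma\lambda_2+(1-\gamma)\lambda_4)\); and at each intermediate vertex \(e_{H_j}\) or \(e_{A_j}\) with \(j\ge 1\) the diagonal entry is \(-\mu_H\) or \(-\mu_A\). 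In all cases \(F_i(e_i)<0\) strictly, so this half follows directly from positivity of the rates.

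For \(x_i=0\) the diagonal term drops out, leaving \(F_i(x)=c_i\,x_{\mathrm{pred}(i)}\) with a single positive coefficient \(c_i\). Strict positivity thus reduces to showing that the unique predecessor coordinate is positive. On the relative interior of the facet \(\{x\in\mathbf{X}:x_i=0\}\) — the generic boundary points at which \(x_i\) is the only vanishing coordinate — every other coordinate, and in particular \(x_{\mathrm{pred}(i)}\), is strictly positive, so \(F_i(x)=c_i\,x_{\mathrm{pred}(i)}>0\). This is the regime in which the claimed strict inequality genuinely holds.

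The hard part will be upgrading strictness from these facet interiors to \emph{every} boundary point with \(x_i=0\), as the statement literally asserts. Because the inflow graph is a single cycle, a point may have \(x_i=0\) while its sole predecessor also vanishes: the vertex \(e_{A_0}\), for instance, has \(x_{H_0}=0\) and \(x_{A_k}=0\), so \(F_{H_0}(e_{A_0})=\mu_A\,x_{A_k}=0\) rather than strictly positive. At such lower-dimensional faces the single available inflow is switched off, and this is the genuine obstruction to the pointwise strict bound. My plan is therefore to isolate precisely the subset of \(\{x_i=0\}\) on which the predecessor is forced positive — which is all that the ensuing self-map (Nagumo-type inward-pointing) argument actually consumes — and to treat these corner configurations, where a zero-coordinate and its unique predecessor vanish simultaneously, as the main obstacle and the place at which the inequality read verbatim is most at risk.
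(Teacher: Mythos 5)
Your reading of the generator is correct, and so is your counterexample; the comparison with the paper's own proof is instructive. The paper's proof rests on the claim that \emph{all} off-diagonal entries $\mathbf{A}_{ij}$, $i\neq j$, are positive, from which it concludes that $F_i(x(t))=\sum_{j\neq i}\mathbf{A}_{ij}x_j(t)>0$ whenever $x_i(t)=0$, since the unit of mass must then sit in the other coordinates. That premise is false: as your cycle decomposition shows, Eqs.~(\ref{mat_1})--(\ref{mat_2}) place exactly one strictly positive off-diagonal entry in each row (the inflow from the unique predecessor in the cycle $H_0\to H_1\to\cdots\to H_k\to A_0\to A_1\to\cdots\to A_k\to H_0$), and every other off-diagonal entry is zero. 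Hence the strict inequality asserted in Proposition~\ref{F_positive} genuinely fails at the corner points you identify: at the vertex of $\mathbf{X}$ with all mass on $A_0$, the coordinate $H_0$ satisfies $x_{H_0}=0$ yet $F_{H_0}=\mu_A\,x_{A_k}=0$. So you have not merely taken a different route; you have exposed that the statement, read verbatim, is false, and that the paper's proof of it is invalid because its positivity premise fails. Your handling of the $x_i=1$ half---the simplex constraint forces $x=e_i$, the inflow term vanishes, and the diagonal entries, evaluated with the state-dependent rates at that vertex, are strictly negative---is correct and agrees with the paper's.

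Your proposed repair is also the right one, and is worth carrying out explicitly rather than leaving as a plan. The only downstream use of Proposition~\ref{F_positive} is in Proposition~\ref{T_selfmap}, where one needs $T_i(x)=x_i+\alpha F_i(x)\ge 0$ when $x_i=0$; for this, the weak inequality $F_i(x)\ge 0$ on the face $\{x\in\mathbf{X}:x_i=0\}$ suffices, and it holds everywhere on that face, since there $F_i(x)=c_i\,x_{\mathrm{pred}(i)}\ge 0$ with $c_i>0$ being one of $\mu_H$, $\mu_A$, $\lambda_{H\to A}$, $\lambda_{A\to H}$. With this weakening (Nagumo-type sub-tangentiality in place of strict inwardness), the self-map property, and hence the Brouwer fixed-point argument and Lemma~\ref{steady_state}, go through unchanged. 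The one thing you should not attempt is the final step of your plan as literally worded---``upgrading strictness to every boundary point''---because no such upgrade exists; state and prove the $\ge 0$ version instead, and note that it is all the equilibrium-existence argument consumes.
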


\begin{proof}
From equations (\ref{mat_A})--(\ref{mat_2}), the off-diagonal entries \(\mathbf{A}_{ij}\) (for \(i \neq j\)) of the matrix \(\mathbf{A}(x(t))\) are positive. Thus, the \(i\)-th component of \(F(x(t))\) can be written as
\begin{equation}
F_i(x(t)) = \dot{x_i}(t)=\sum_{j=1}^n \mathbf{A}_{ij}\, x_j(t)
= \mathbf{A}_{ii}\, x_i(t) + \sum_{\substack{j=1 \\ j \neq i}}^n \mathbf{A}_{ij}\, x_j(t).
\end{equation}

If \(x_i(t) = 0\), then
\begin{equation}
F_i(x(t)) = \sum_{\substack{j=1 \\ j \neq i}}^n \mathbf{A}_{ij}\, x_j(t)
\end{equation}
Since \(\mathbf{A}_{ij} > 0\) for \(j \neq i\) and the components \(x_j(t)\) are nonnegative with \(\sum_{j=1}^n x_j(t)=1\). Hence, \(F_i(x(t)) > 0\).

If \(x_i(t) = 1\), then $x_j(t)=0$ for $j\neq i$ and thus
\begin{equation}
F_i(x(t)) = \mathbf{A}_{ii}
\end{equation}
Since \(\mathbf{A}_{ii} < 0\), it follows that \(F_i(x(t)) < 0\).
\end{proof}

Proposition~\ref{F_positive} guarantees that \(F(x(t))\) is inward pointing along the boundary of the state space. In other words, if a component \(x_i(t)\) is at its lower bound (i.e., \(x_i(t)=0\)), then its derivative \(F_i(x(t))\) is positive, which pushes the state into the interior. Conversely, if \(x_i(t)\) is at its upper bound (i.e., \(x_i(t)=1\)), then \(F_i(x(t))\) is negative, again directing the state into the interior. 




\begin{proposition}\label{F_tangent}
    For any $x(t)\in\mathbf{X}$ at time $t$, $\sum_{i=1}^n F_i(x(t)) = 0$ always holds. 
\end{proposition}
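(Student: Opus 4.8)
The plan is to exploit the generator (mass-conservation) structure of $\mathbf{A}(x(t))$. Since $F(x(t)) = \dot{x}(t) = \mathbf{A}(x(t))\,x(t)$, I would write the sum of components using the all-ones vector $\mathbf{1} = (1,\dots,1)^T$ as $\sum_{i=1}^n F_i(x(t)) = \mathbf{1}^T \mathbf{A}(x(t))\,x(t)$. The entire claim then reduces to a single identity: every column of $\mathbf{A}(x(t))$ sums to zero, i.e. $\mathbf{1}^T \mathbf{A}(x(t)) = \mathbf{0}^T$. Granting this, the conclusion is immediate, $\sum_i F_i(x(t)) = \mathbf{0}^T x(t) = 0$, and it holds for every $x(t) \in \mathbf{X}$ without using the normalization $\sum_i x_i = 1$.

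First I would verify the column-sum-zero property directly from the block decomposition in Eqs.~(\ref{mat_A})--(\ref{mat_2}). For each interior column of $A_{HH}$ (resp. $A_{AA}$) the diagonal entry $-\mu_H$ (resp. $-\mu_A$) is cancelled by the subdiagonal entry $+\mu_H$ (resp. $+\mu_A$) one row below, reflecting the balanced progression through the lockout stages. The switching columns for the unlocked states $H_0$ and $A_0$ balance $-\lambda_{H\to A}$ against $+\lambda_{H\to A}$ (and analogously for $\lambda_{A\to H}$), so these also sum to zero. The only subtlety is at the two coupling columns corresponding to the terminal lockout states $H_k$ and $A_k$: within $A_{HH}$ (resp. $A_{AA}$) such a column contributes only the diagonal term $-\mu_H$ (resp. $-\mu_A$), but the balancing positive mass reappears in the off-diagonal block, namely the entry $\mu_H$ in $A_{AH}$ (resp. $\mu_A$ in $A_{HA}$), which routes the completed transition into $A_0$ (resp. $H_0$). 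Summing the full column across both row-blocks therefore yields zero.

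A cleaner, equivalent route is to argue at the level of the affine decomposition $\mathbf{A}(x) = \mathbf{A}_0 + q_{\mathrm{HDV}}(x)\,\mathbf{A}_1 + q_{\mathrm{AV}}(x)\,\mathbf{A}_2$ of Eq.~(\ref{finalsystem}). Each of the constant matrices $\mathbf{A}_0$, $\mathbf{A}_1$, $\mathbf{A}_2$ has zero column sums by inspection of Eqs.~(\ref{eq:A0})--(\ref{eq:AHH2_AAA2}), so $\mathbf{1}^T\mathbf{A}_0 = \mathbf{1}^T\mathbf{A}_1 = \mathbf{1}^T\mathbf{A}_2 = \mathbf{0}^T$. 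Since $\mathbf{1}^T$ distributes linearly over the affine combination, $\mathbf{1}^T \mathbf{A}(x) = \mathbf{1}^T\mathbf{A}_0 + q_{\mathrm{HDV}}(x)\,\mathbf{1}^T\mathbf{A}_1 + q_{\mathrm{AV}}(x)\,\mathbf{1}^T\mathbf{A}_2 = \mathbf{0}^T$, independently of the state-dependent weights $q_{\mathrm{HDV}}(x)$ and $q_{\mathrm{AV}}(x)$. This makes transparent that conservation holds uniformly in $x(t)$, which is precisely what the statement requires.

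I do not anticipate a genuine obstacle: this is the standard mass-conservation identity for a generator matrix written with a column state vector, and the state dependence of $\mathbf{A}$ is harmless because each $\lambda$ enters a given column once with a $+$ sign and once with a $-$ sign. The single place demanding care is the bookkeeping at the coupling columns $H_k$ and $A_k$, where the balancing positive entry lives in the cross block ($A_{AH}$ or $A_{HA}$) rather than adjacent within the same block; overlooking this inter-block flow is the only way the argument could spuriously appear to fail.
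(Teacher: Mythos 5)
Your proposal is correct and follows essentially the same route as the paper: both reduce the claim to the identity $\mathbf{1}^T\mathbf{A}(x(t)) = \mathbf{0}^T$ (zero column sums of the generator) and then conclude $\sum_i F_i = \sum_j \bigl(\sum_i \mathbf{A}_{ij}\bigr)x_j = 0$. The only difference is that the paper simply asserts the column-sum property while you verify it explicitly (including the inter-block coupling at $H_k$ and $A_k$), which is a welcome but not substantively different addition.
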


\begin{proof}
    Since the generator matrix $\mathbf{A}(x(t))$ is a Metzler (i.e., its off-diagonal entries are nonnegative), and satisfies $\sum_{i=1}^n\mathbf{A}_{ij}=0$ for every column $j$. We have 
    \begin{equation}
        \begin{aligned}
            \sum_{i=1}^{n} F_i(x(t)) &= \sum_{i=1}^{n} \dot{x}_i(t) = \sum_{i=1}^{n} \left[ \mathbf{A}(x(t))x(t) \right] \\
            &= \sum_{i=1}^{n}\sum_{j=1}^n \mathbf{A}_{ij} x_j(t) = \sum_{j=1}^n \left( \sum_{i=1}^{n} \mathbf{A}_{ij}\right) x_j(t)=0
        \end{aligned}
    \end{equation}

Proposition~\ref{F_tangent} shows that \(F(x(t))\) preserves the total sum of the components of \(x(t)\). That is, if the initial state \(x(0) \in \mathbf{X}\) satisfies \(\sum_{i=1}^n x_i(0)=1\), then for every \(t > 0\) the state \(x(t)\) remains in \(\mathbf{X}\) with \(\sum_{i=1}^n x_i(t)=1\). \end{proof}

Based on Propositions~\ref{F_positive} and~\ref{F_tangent}, we can construct the mapping \(T\) in Proposition \ref{T_selfmap}.

\begin{proposition}\label{T_selfmap}
For a sufficiently small nonnegative constant \(\alpha\), the mapping \(T: \mathbf{X} \to \mathbf{X}\) defined by
\begin{equation}\label{map_def}
T(x(t)) := x(t) + \alpha F(x(t))
\end{equation}
is a continuous self-map on \(\mathbf{X}\); that is, \(T(x(t)) \in \mathbf{X}\) for all \(x(t) \in \mathbf{X}\).
\end{proposition}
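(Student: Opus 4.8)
The plan is to check the two defining constraints of $\mathbf{X}$ separately while treating continuity as immediate. Continuity of $T = \mathrm{id} + \alpha F$ holds for any fixed $\alpha$ because $F(x(t)) = \mathbf{A}(x(t))\,x(t)$ is polynomial in the entries of $x(t)$: the weights $q_{\mathrm{HDV}}$ and $q_{\mathrm{AV}}$ are affine in $x(t)$, so $\mathbf{A}(x(t))$ is affine and $F$ is quadratic. For the affine constraint I would compute directly
\[
\sum_{i=1}^n \bigl[T(x(t))\bigr]_i = \sum_{i=1}^n x_i(t) + \alpha \sum_{i=1}^n F_i(x(t)) = 1 + \alpha\cdot 0 = 1,
\]
invoking $\sum_i x_i(t) = 1$ for $x(t)\in\mathbf{X}$ together with Proposition~\ref{F_tangent}. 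Since this holds for every $\alpha$, the entire role of ``sufficiently small $\alpha$'' is to secure the nonnegativity constraints $\bigl[T(x(t))\bigr]_i = x_i(t) + \alpha F_i(x(t)) \ge 0$.

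The heart of the argument—and the main obstacle—is to produce a single $\alpha^\ast > 0$ for which this nonnegativity holds uniformly over all $x(t)\in\mathbf{X}$, all coordinates $i$, and all $\alpha\in[0,\alpha^\ast]$. A binding constraint arises only where $F_i(x(t)) < 0$, in which case the per-point requirement is $\alpha \le x_i(t)/|F_i(x(t))|$; this naive bound threatens to collapse to zero as $x_i(t)\to 0$. This is exactly where Proposition~\ref{F_positive} intervenes: on the face $K_i = \{x\in\mathbf{X} : x_i = 0\}$ it guarantees $F_i(x) > 0$, so the inward-pointing property forbids the dangerous combination of $x_i(t)$ near $0$ with $F_i(x(t)) < 0$. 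The difficulty is therefore not pointwise feasibility but uniformity across the boundary faces.

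I would make this quantitative by compactness. Fixing $i$, set $U_i = \{x\in\mathbf{X} : F_i(x) > 0\}$, which is relatively open and, by Proposition~\ref{F_positive}, contains the face $K_i$; on $U_i$ one has $x_i(t) + \alpha F_i(x(t)) \ge 0$ for every $\alpha\ge 0$ because both summands are nonnegative. On the complementary compact set $\mathbf{X}\setminus U_i = \{x\in\mathbf{X} : F_i(x)\le 0\}$ the coordinate cannot vanish—otherwise $x\in K_i\subset U_i$—so $x_i(t)\ge \delta_i > 0$ there, while $|F_i|\le M$ for some constant $M$ by continuity of $F$ on the compact set $\mathbf{X}$; hence $x_i(t) + \alpha F_i(x(t)) \ge \delta_i - \alpha M \ge 0$ whenever $\alpha \le \delta_i/M$ (and $\mathbf{X}\setminus U_i=\varnothing$ imposes no constraint). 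Taking $\alpha^\ast = \min_{1\le i\le n}\delta_i/M > 0$, a positive minimum over finitely many indices, makes all coordinate constraints hold simultaneously for $\alpha\in[0,\alpha^\ast]$. Combined with the sum identity and continuity, this yields $T(x(t))\in\mathbf{X}$ for every $x(t)\in\mathbf{X}$, establishing the continuous self-map; crucially, without the inward-pointing guarantee of Proposition~\ref{F_positive} no positive $\alpha^\ast$ could exist.
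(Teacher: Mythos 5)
Your proposal is correct and follows the same overall strategy as the paper's proof: sum preservation via Proposition~\ref{F_tangent} and nonnegativity via Proposition~\ref{F_positive} together with a sufficiently small $\alpha$. The one place you genuinely go beyond the paper is the nonnegativity step. The paper simply asserts that ``by continuity and the boundedness of $F$ on the compact set $\mathbf{X}$, we can choose $\alpha$ sufficiently small'' so that $x_i(t)+\alpha F_i(x(t))\ge 0$, which leaves open exactly the danger you identify: the pointwise bound $\alpha\le x_i(t)/|F_i(x(t))|$ could degenerate as $x_i(t)\to 0$ with $F_i(x(t))<0$, so boundedness of $F$ alone does not yield a single uniform $\alpha^\ast$. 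Your decomposition of $\mathbf{X}$ into the relatively open set $U_i=\{F_i>0\}$ (which contains the face $\{x_i=0\}$ by Proposition~\ref{F_positive}) and its compact complement, on which $x_i\ge\delta_i>0$, supplies the uniform lower bound the paper's argument implicitly relies on but never establishes. Your observation that $F$ is quadratic in $x$ (since $q_{\mathrm{HDV}}$ and $q_{\mathrm{AV}}$ are affine in the state) also gives a cleaner justification of continuity than the paper offers. In short: same route, but your write-up closes a real gap in the published proof rather than introducing one.
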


\begin{proof}
First, we demonstrate that \(T\) preserves the sum of the components. For any \(x(t) \in \mathbf{X}\), we have
\begin{equation}\label{eq:T_sum_preserved}
\sum_{i=1}^n T_i(x(t)) = \sum_{i=1}^n \Bigl( x_i(t) + \alpha\,F_i(x(t)) \Bigr)
= \sum_{i=1}^n x_i(t) + \alpha \sum_{i=1}^n F_i(x(t)) = 1 + \alpha\cdot 0 = 1.
\end{equation}
Thus, the total sum of the components remains one.

Then, we verify non-negativity. If \(x_i(t) = 0\), Proposition~\ref{F_positive} ensures that \(F_i(x(t)) > 0\), so \(T_i(x(t)) > 0\) for any \(\alpha > 0\). More generally, if \(x_i(t) > 0\) and \(F_i(x(t))<0\), then by continuity and the boundedness of \(F\) on the compact set \(\mathbf{X}\), we can choose \(\alpha\) sufficiently small so that

\begin{equation}\label{eq:T_nonnegative}
T_i(x(t)) = x_i(t) + \alpha\,F_i(x(t)) \ge 0 \quad \text{for all } i.
\end{equation}
Since both the non-negativity condition (Eq.~\ref{eq:T_nonnegative}) and the sum condition (Eq.~\ref{eq:T_sum_preserved}) are satisfied, it follows that \(T(x(t)) \in \mathbf{X}\) (Eq. \ref{x-range}). Therefore, \(T\) is a self-map on \(\mathbf{X}\). 
\end{proof}



Based on the above analysis, we now demonstrate that the fixed point $x^*$ of the mapping $T$, guaranteed by Brouwer's Fixed-Point Theorem, corresponds to the equilibrium of our system.

\begin{lemma}\label{steady_state}
According to Brouwer's Fixed-Point Theorem, since the feasible state space \(\mathbf{X}\) is non-empty, compact, and convex, and the mapping \(T\) defined in Eq.~(\ref{map_def}) is a continuous self-map on \(\mathbf{X}\), there exists at least one fixed point \(x^*\in\mathbf{X}\) satisfying \(T(x^*) = x^*\). Since \(T(x) = x + \alpha F(x)\) for \(\alpha > 0\), the condition \(T(x^*) = x^*\) implies that \(x^* = x^* + \alpha F(x^*)\), which in turn yields \(F(x^*) = 0\). Consequently, \(\mathbf{A}(x^*)x^* = 0\), and thus \(x^*\) is an equilibrium of system (\ref{eq:nonlinear_state_dynamics}).
\end{lemma}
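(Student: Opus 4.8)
The plan is to realize the equilibrium condition $\mathbf{A}(x^*)x^* = 0$ as a fixed point of an auxiliary map and then invoke Brouwer's theorem (Theorem~\ref{Brouwer}). The key observation is that the zeros of the vector field $F(x) = \mathbf{A}(x)x$ coincide \emph{exactly} with the fixed points of the forward-Euler map $T(x) = x + \alpha F(x)$ for any fixed step $\alpha > 0$: indeed $T(x)=x$ is equivalent to $\alpha F(x)=0$, and since $\alpha>0$ this is equivalent to $F(x)=0$. This reduces the existence of an equilibrium of the continuous-time system (\ref{eq:nonlinear_state_dynamics}) to the existence of a fixed point of $T$, which is precisely the kind of statement Brouwer delivers. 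I would prefer this Euler-map construction over, say, using the time-$\tau$ flow map of the ODE, because fixed points of the latter are only periodic orbits of period $\tau$ and need not be equilibria, whereas fixed points of $T$ are equilibria on the nose.

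First I would fix a step size $\alpha > 0$ small enough that $T$ is a self-map of $\mathbf{X}$; such an $\alpha$ exists by Proposition~\ref{T_selfmap}, whose proof in turn rests on the inward-pointing property (Proposition~\ref{F_positive}) and the tangency/conservation property (Proposition~\ref{F_tangent}). Combined with the fact that $\mathbf{X}$ is a non-empty compact convex set (Remark~\ref{x-convex}) and that $T$ is continuous—being a quadratic polynomial map in $x$, since $\mathbf{A}(x)$ depends affinely on $x$ through $q_{\mathrm{HDV}}(x)$—all hypotheses of Brouwer's Fixed-Point Theorem are satisfied. Applying Theorem~\ref{Brouwer} then yields a point $x^* \in \mathbf{X}$ with $T(x^*) = x^*$. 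Unwinding this equation, $x^* = x^* + \alpha F(x^*)$ forces $\alpha F(x^*) = 0$, hence $F(x^*) = 0$, i.e. $\mathbf{A}(x^*)x^* = 0$; and because $x^*\in\mathbf{X}$ it is a bona fide probability vector, so the equilibrium is physically admissible.

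The conceptual heavy lifting has already been discharged in the preceding propositions, so the main point requiring care here is logical rather than computational: I must ensure that the fixed point produced by Brouwer—which a priori depends on the arbitrary choice of $\alpha$—genuinely encodes an equilibrium of the \emph{original} dynamics. This is secured by the fact that the equilibrium condition $F(x^*)=0$ is independent of $\alpha$; the step size is merely a device that converts the continuous-time zero-finding problem into a fixed-point problem, and every admissible $\alpha>0$ produces the same set of candidate equilibria. A secondary subtlety worth flagging is that Brouwer guarantees only existence, not uniqueness, of $x^*$, and says nothing about whether trajectories converge to it; pinning down the number of equilibria and their attractivity is properly deferred to the subsequent stability analysis via the common quadratic Lyapunov function.
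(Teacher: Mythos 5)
Your proposal is correct and follows essentially the same route as the paper: it applies Brouwer's theorem to the Euler-type self-map $T(x)=x+\alpha F(x)$ on the compact convex simplex $\mathbf{X}$, using Propositions~\ref{F_positive}--\ref{T_selfmap} and Remark~\ref{x-convex} exactly as the paper does, and then unwinds $T(x^*)=x^*$ to obtain $F(x^*)=\mathbf{A}(x^*)x^*=0$. Your added remarks on continuity of $T$, on why the Euler map is preferable to the time-$\tau$ flow map, and on the $\alpha$-independence of the resulting equilibrium set are accurate refinements rather than deviations.
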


\subsubsection{Stability Analysis}
In our system, because the sum of the system states equal to one, the generator matrix is inherently singular, indicating that one coordinate is redundant. By leveraging this conservation property, we eliminate the redundant state and thereby obtain a reduced system that captures the essential dynamics  \citep{norris1998markov}. we have 
\begin{equation}
\label{eq:dyn_reduced_i}
\dot{x}_i(t) = \sum_{j=0}^{n-1} \mathbf{A}_{ij}\,x_j(t) + \mathbf{A}_{in}\,x_n(t)
\end{equation}
Since the last state satisfies $x_n(t)= 1 -\sum_{j=0}^{n-1} x_j(t)$, we can express dynamics solely in terms of the first \(n-1\) states, as following
\begin{equation}
\dot{x}_i(t) =\; \sum_{j=0}^{n-1} \mathbf{A}_{ij}\,x_j(t) + \mathbf{A}_{in}\,\Bigl[\,1 - \sum_{j=0}^{n-1} x_j(t)\Bigr]
\end{equation}
Identifying \(y_i(t)=x_i(t)\) for \(i=0,\dots,n-1\), we can rearrange the terms to obtain
\begin{equation}
\dot{y}_i(t) = \sum_{j=0}^{n-1} \Bigl(\mathbf{A}_{ij} - \mathbf{A}_{in}\Bigr)y_j(t) + \mathbf{A}_{in}
\end{equation}
By setting $[\mathbf{A}']_{ij} =\mathbf{A}_{ij} - \mathbf{A}_{in}
\;\text{for } i,j=0,\dots,n-1$, and $\mathbf{c}=[\mathbf{A}_{0n},\mathbf{A}_{1n},...,\mathbf{A}_{n-1,n}]^T$, we have 
\begin{equation}
\dot{y}(t) = \mathbf{A}'\,y(t) + \mathbf{c}
\end{equation}
To analyze the stability of this reduced system, we define a perturbation \(z(t)\) as
\begin{equation}\label{eq:z_def}
z(t) = y(t) - y^*
\end{equation}
This definition represents the deviation of the state from its equilibrium. Since \(y^*\) is constant, we have
\begin{equation}\label{eq:z_dot}
\dot{z}(t) = \dot{y}(t) = \mathbf{A}'\,y(t) + \mathbf{c}.
\end{equation}
Substituting \(y(t) = y^* + z(t)\) into \eqref{eq:z_dot} yields
\begin{equation}\label{eq:substitution}
\dot{z}(t) = \mathbf{A}'\bigl(y^* + z(t)\bigr) + \mathbf{c} = \mathbf{A}'\,y^* + \mathbf{A}'\,z(t) + \mathbf{c}.
\end{equation}
Since \(y^*\) satisfies \(\mathbf{A}'\,y^* + \mathbf{c} = 0\), the terms \(\mathbf{A}'\,y^*\) and $\mathbf{c}$ cancel, leaving
\begin{equation}\label{eq:final_z_dot}
\dot{z}(t) = \mathbf{A}'\,z(t).
\end{equation}
As indicated in Eq.~\ref{finalsystem},
\begin{equation}\label{A-pr}
A' = A'_0 + q_{\mathrm{HDV}}(t)\,A'_1 + \bigl(1 - q_{\mathrm{HDV}}(t)\bigr)\,A'_2
\end{equation}
where we use $q_{\mathrm{HDV}}(t)$ instead of $q_{\mathrm{HDV}}(x(t))$ to simplify the notation.

\begin{proposition}\label{global}
Suppose there exists a common positive definite matrix \(P \succ 0\) such that
\begin{equation}\label{Prop:polytope_stability}
M_0^\top P + P\,M_0 \prec 0 \quad \text{and} \quad M_1^\top P + P\,M_1 \prec 0,
\end{equation}
where \(M_0 = A'_0 + A'_2\) and \(M_1 = A'_0 + A'_1\). Then, for every \(q_{\mathrm{HDV}}(t) \in [0,1]\), both the reduced system \(z(t)\) and the proposed system \(x(t)\) are robustly Hurwitz, ensuring global exponentially stability.
\end{proposition}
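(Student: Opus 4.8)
The plan is to exhibit a single common quadratic Lyapunov function $V(z) = z^{\top} P z$ that certifies stability for the entire matrix family $\{A'(q) : q \in [0,1]\}$ simultaneously, exploiting the fact that the state-dependent coefficient $q_{\mathrm{HDV}}(t)$ enters affinely. The crucial first step is an algebraic observation: by Eq.~\ref{A-pr} the system matrix is a convex combination of the two vertex matrices, since
\begin{equation}
A'(q) = A'_0 + q\,A'_1 + (1-q)\,A'_2 = q\,(A'_0 + A'_1) + (1-q)\,(A'_0 + A'_2) = q\,M_1 + (1-q)\,M_0 .
\end{equation}
Thus, for every admissible value $q_{\mathrm{HDV}}(t) \in [0,1]$, the instantaneous matrix $A'(t)$ lies on the segment (a one-dimensional polytope) joining $M_0$ and $M_1$. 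This is exactly the structure to which a vertex-based robust stability argument applies.

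Next I would take $V(z) = z^{\top} P z$, which is positive definite and radially unbounded because $P \succ 0$, and differentiate it along trajectories of the reduced dynamics $\dot z = A'(q)\,z$ from Eq.~\ref{eq:final_z_dot}. Using $P = P^{\top}$ and the affine structure above,
\begin{equation}
\dot V(z) = z^{\top}\!\bigl(A'(q)^{\top} P + P A'(q)\bigr)z = q\,z^{\top}\!\bigl(M_1^{\top} P + P M_1\bigr)z + (1-q)\,z^{\top}\!\bigl(M_0^{\top} P + P M_0\bigr)z .
\end{equation}
By the hypothesis of Eq.~\ref{Prop:polytope_stability} both vertex forms are negative definite, and since $q \in [0,1]$ the right-hand side is a convex combination of two negative-definite quadratic forms; hence $\dot V(z) < 0$ for all $z \neq 0$, \emph{irrespective of how} $q_{\mathrm{HDV}}(t)$ evolves in time or depends on the state. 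This is precisely why a common $P$ is indispensable rather than a frozen-matrix eigenvalue test.

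To upgrade pointwise negativity to global \emph{exponential} stability, I would extract a uniform decay rate. Writing $Q(q) := -\bigl(A'(q)^{\top} P + P A'(q)\bigr)$, the identity above shows $Q(q)$ is a convex combination of the two positive-definite matrices $-(M_1^{\top}P + PM_1)$ and $-(M_0^{\top}P + PM_0)$, hence $Q(q) \succ 0$ for every $q \in [0,1]$; by continuity of $\lambda_{\min}(\cdot)$ and compactness of $[0,1]$ the constant $c := \min_{q\in[0,1]} \lambda_{\min}(Q(q))$ is strictly positive. Combining $\dot V \le -c\,\|z\|^2$ with the sandwich $\lambda_{\min}(P)\,\|z\|^2 \le V(z) \le \lambda_{\max}(P)\,\|z\|^2$ gives $\dot V \le -\tfrac{c}{\lambda_{\max}(P)}\,V$, and Gr\"onwall's inequality yields $\|z(t)\| \le \sqrt{\lambda_{\max}(P)/\lambda_{\min}(P)}\,\|z(0)\|\,e^{-c t/(2\lambda_{\max}(P))}$. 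Since $z(t) = y(t) - y^{*}$ by Eq.~\ref{eq:z_def} and the eliminated coordinate $x_n$ is recovered from the conservation law, exponential convergence of $z$ transfers back to the full state $x(t)$.

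The main obstacle is conceptual rather than computational. One must recognize that an ordinary Hurwitz test on the frozen matrix $A'(q)$ at each instant is \emph{not} sufficient: because $q_{\mathrm{HDV}}(x(t))$ closes the loop, the system is genuinely state-dependent (effectively a linear time-varying or switched system), and pointwise Hurwitzness of a matrix family does not guarantee stability of the time-varying flow. The common quadratic Lyapunov function is what circumvents this gap, and the sole genuine burden is the feasibility of the two vertex inequalities in Eq.~\ref{Prop:polytope_stability} for a single $P$ — a condition posited in the hypothesis and verified numerically in the experiments.
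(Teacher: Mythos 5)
Your proposal is correct and follows essentially the same route as the paper: rewriting \(A'(q_{\mathrm{HDV}})\) as a convex combination of the two vertex matrices \(M_0\) and \(M_1\) and invoking a common quadratic Lyapunov function \(V(z)=z^\top P z\) whose derivative is a convex combination of the two negative-definite vertex forms. If anything, your write-up is more careful than the paper's, which leans on a citation to robust control for the vertex-to-polytope step and states that pointwise Hurwitzness "guarantees global stability," whereas you correctly identify that it is the common \(P\) (together with the uniform decay rate extracted by compactness of \([0,1]\)) — not frozen-matrix Hurwitzness — that handles the state-dependent, time-varying nature of \(q_{\mathrm{HDV}}(x(t))\).
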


\begin{proof}
\(A'\) is an affine combination of constant matrices, so we can rewrite it as a one-dimensional uncertainty polytope. By defining \(M_0 = A'_0 + A'_2\) and \(M_1 = A'_0 + A'_1\)
we have
\begin{equation}\label{eq:Aprime_polytope}
A'(q_{\mathrm{HDV}}(t)) = M_0 + q_{\mathrm{HDV}}(t)\Bigl(M_1 - M_0\Bigr).
\end{equation}
As \(q_{\mathrm{HDV}}(t)\) varies in the interval \([0,1]\), the matrix \(A'(q_{\mathrm{HDV}}(t))\) traces a straight line in the space of matrices between the two extreme cases \(M_0\) (when \(q_{\mathrm{HDV}}(t)=0\)) and \(M_1\) (when \(q_{\mathrm{HDV}}(t)=1\)). In other words, every matrix in the family is a convex combination of \(M_0\) and \(M_1\).

Based on a common quadratic Lyapunov function in robust control \citep{zhou1996robust}, if there exists a common positive definite matrix \(P \succ 0\) such that \(M_0^\top P + P\,M_0 \prec 0\) and \(M_1^\top P + P\,M_1 \prec 0\), then for every \(q_{\mathrm{HDV}}(t) \in [0,1]\),
\begin{equation}\label{eq:polytope_stability}
A'(q_{\mathrm{HDV}}(t))^\top P + P\,A'(q_{\mathrm{HDV}}(t)) \prec 0.
\end{equation}

That is, every matrix in the family is Hurwitz, which guarantees global stability of the reduced dynamics. Since the reduced system is equivalent to the proposed system, it follows that \(x(t)\) is also globally stable.\end{proof}

\begin{remark}
The sufficient stability condition expressed by the linear matrix inequality in Eq.~\ref{Prop:polytope_stability} can be efficiently solved using numerical tools such as CVX \citep{cvx,gb08} or YALMIP \citep{Lofberg2004}.
\end{remark}

Therefore, based on Lemma~\ref{steady_state} and Proposition~\ref{global}, the system defined by Eq.~(\ref{finalsystem}) admits a unique equilibrium, which is globally exponentially stable.

\subsection{Traffic Throughput Analysis}
To model the disturbances induced by control mode transitions between AV and HDV, we characterize disturbance propagation through traffic throughput, which is determined by the effective average headway in a mixed traffic stream. The traffic stream consists of two types of vehicles, including PAVs and permanent HDV. PAVs can switch between HDV and AV modes in response to surrounding traffic conditions. To more accurately estimate traffic throughput, it is important to account for changes in headway during mode transitions. We model this headway transition using a general sigmoid function to reflect the gradual shift in vehicle behavior \citep{shao2017sigmoid,chen2024sigmoid}. Our framework adopts an Erlang-\(k\) distribution to approximate the continuous transition time during mode changes. Accordingly, we use a piecewise constant approximation, based on mean values, to discretize the continuous headway transition into \(k\) segments, as shown in Figure \ref{fig:piece}. Then, the corresponding headway associated with each system state of PAV is obtained. In the unlocked states—after a PAV has completed its mode transition—the headway is set to the average equilibrium headway of either the AV mode or the HDV mode, depending on the current mode. For permanent HDVs, the headway is always given by the average HDV headway.

\begin{figure}[!ht]\centering
    \includegraphics[width=0.6\textwidth]{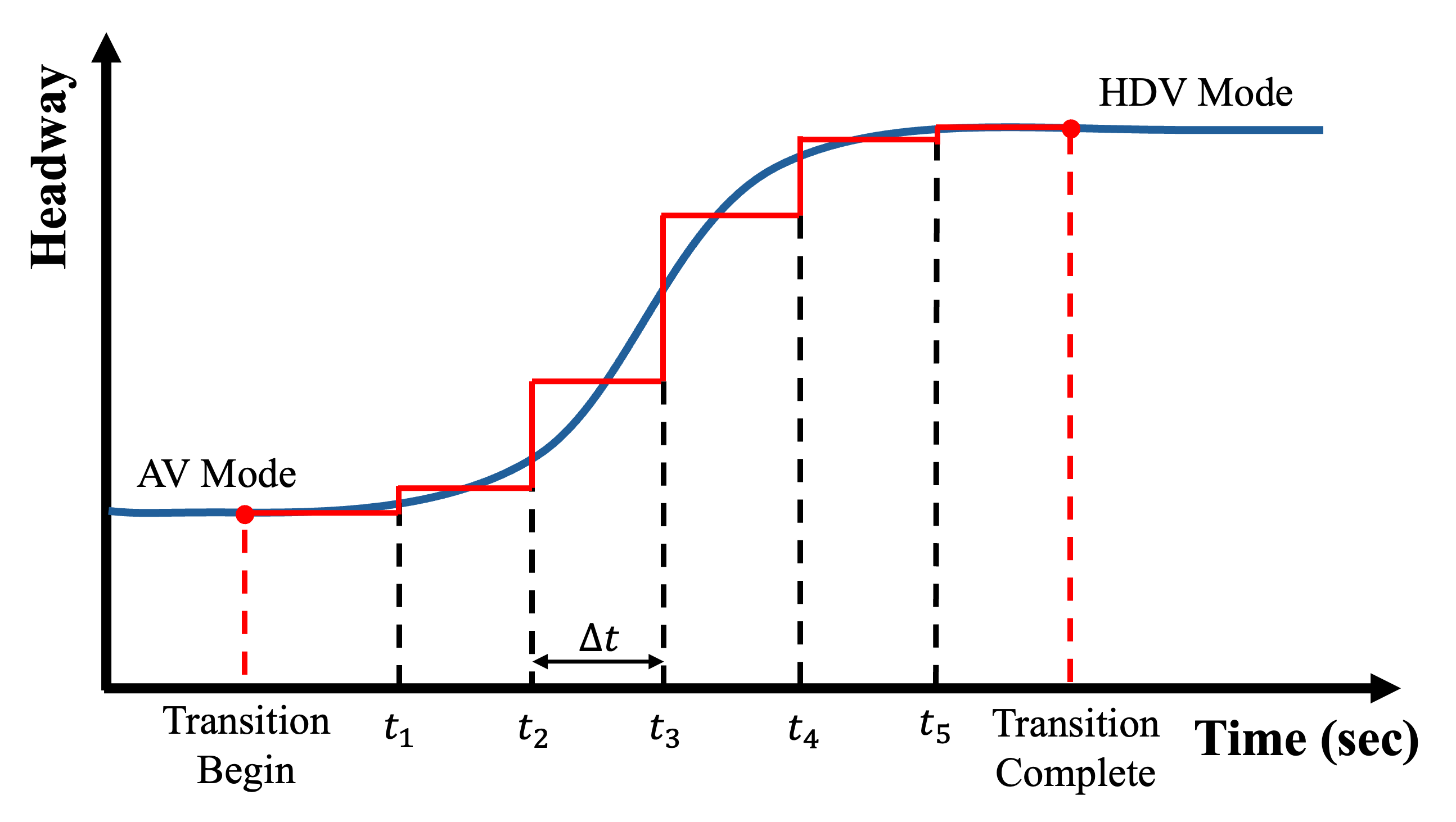}
    \vspace{-12pt} 
    \caption{Example of  piecewise constant approximation for sigmoid headway transition function}
    \label{fig:piece}
\end{figure}
\vspace{-6pt}

The effective average headway is computed as the weighted sum of state-specific headway, based on the fraction of vehicles in each state, as follows:
\begin{equation}
  h_{\mathrm{eff}}(t) = 
  (1 - \gamma)\,
  \underbrace{\sum_{i=0}^{k} \big[ x_{H_i}(t)\,h_{H_i}(t) + x_{A_i}(t)\,h_{A_i}(t) \big]}_{\text{Average Headway of PAV}}
  + \gamma\,
  \underbrace{h_{H_0}(t)}_{\substack{\text{Average} \\ \text{Headway of}\\ \text{Permanent HDV}}}
  \label{eq:tau_eff}
\end{equation}

where \(h_{H_i}(t) = \tau_{H_i}(t) + L_{H_i}(t)/v(t)\) and \(h_{A_i}(t) = \tau_{A_i}(t) + L_{A_i}(t)/v(t)\). The time gaps \(\tau_{H_i}(t)\) and \(\tau_{A_i}(t)\), as well as the standstill distances \(L_{H_i}(t)\) and \(L_{A_i}(t)\), are determined using a piecewise approximation, as shown in Figure \ref{fig:piece}. \(h_{H_0}(t) = \tau_{H_0} + L_{H_0}/v(t)\), where \(\tau_{H_0}\) and \(L_{H_0}\) represent the equilibrium time gap and standstill distance of HDVs, respectively. \(v(t)\) denotes the average traffic speed at time \(t\). Since traffic flow capacity is defined as the number of vehicles passing a point per unit time, it is given by the inverse of the effective headway \citep{chen2023stochastic}. Accordingly, the instantaneous traffic throughput, denoted by \(C(t)\) (vehicles per unit time per lane), is expressed as:

\begin{equation}
  C(t) = \frac{1}{h_{\mathrm{eff}}(t)}
  \label{eq:traffic_flow_capacity}
\end{equation}

By the existence of an equilibrium state, we define the following lemma for the steady-state throughput:

\begin{lemma}
Assume the system reaches an equilibrium as \(t \to \infty\), such that the state vector converges to a fixed point \(x^*\) and the average traffic speed approaches a constant \(v^*\). Then, the steady-state throughput of mixed traffic, consisting of PAVs and permanent HDVs, is given by
\begin{equation}
  C_{\infty} 
       = \frac{1}{(1 - \gamma)\,\sum_{i=0}^{k} \big( x_{H_i}^*\,h_{H_i}^* + x_{A_i}^*\,h_{A_i}^* \big) + \gamma\,h_{H_0}^*},
  \label{eq:steady_traffic_flow_capacity}
\end{equation}
where \(h_{H_i}^* = \tau_{H_i}^* + L_{H_i}^*/v^*\) and \(h_{A_i}^* = \tau_{A_i}^* + L_{A_i}^*/v^*\) are the equilibrium headways in HDV and AV modes, respectively, and \(h_{H_0}^*\) is the equilibrium headway of permanent HDVs.
\end{lemma}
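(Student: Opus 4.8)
The plan is to read the claim as a continuity-and-convergence statement. The instantaneous throughput $C(t)=1/h_{\mathrm{eff}}(t)$ from Eq.~\eqref{eq:traffic_flow_capacity} is a fixed continuous function of the state fractions, the per-state headways, and the average speed, so it suffices to send each of these arguments to its limit and then invoke continuity of the finite weighted sum in Eq.~\eqref{eq:tau_eff} and of the reciprocal map. The convergence of the ingredients comes from results already in hand: global exponential stability (Proposition~\ref{global}), together with the equilibrium guaranteed by Lemma~\ref{steady_state}, gives $x_{H_i}(t)\to x_{H_i}^*$ and $x_{A_i}(t)\to x_{A_i}^*$ for each $i=0,\dots,k$, while the hypothesis of the lemma itself supplies $v(t)\to v^*$.

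First I would handle the per-state headways. Writing $h_{H_i}(t)=\tau_{H_i}+L_{H_i}/v(t)$ and $h_{A_i}(t)=\tau_{A_i}+L_{A_i}/v(t)$, where the time gaps and standstill distances are the fixed representative values assigned to each state by the piecewise-constant approximation (so that within a fixed state they do not vary with $t$), continuity of $v\mapsto 1/v$ at $v^*$, which uses $v^*>0$, yields $h_{H_i}(t)\to h_{H_i}^*$, $h_{A_i}(t)\to h_{A_i}^*$, and $h_{H_0}(t)\to h_{H_0}^*$. Then, in Eq.~\eqref{eq:tau_eff}, each summand $x_{H_i}(t)\,h_{H_i}(t)$ and $x_{A_i}(t)\,h_{A_i}(t)$ is a product of two convergent sequences and hence tends to the product of the limits; because the sum is finite and the outer weights $1-\gamma$ and $\gamma$ are constants, it follows that $h_{\mathrm{eff}}(t)\to h_{\mathrm{eff}}^*$, where $h_{\mathrm{eff}}^*$ is precisely the denominator appearing in Eq.~\eqref{eq:steady_traffic_flow_capacity}.

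Finally, to pass the limit through the reciprocal I would verify strict positivity of $h_{\mathrm{eff}}^*$. Each equilibrium headway $h_{H_i}^*,h_{A_i}^*,h_{H_0}^*$ is strictly positive, since time gaps and standstill distances are positive and $v^*>0$; the weights $x_{H_i}^*,x_{A_i}^*\ge 0$ sum to one and $\gamma\in[0,1]$, so $h_{\mathrm{eff}}^*>0$ is bounded away from zero. Hence $h\mapsto 1/h$ is continuous at $h_{\mathrm{eff}}^*$, and $C(t)=1/h_{\mathrm{eff}}(t)\to 1/h_{\mathrm{eff}}^*=:C_\infty$, which is exactly Eq.~\eqref{eq:steady_traffic_flow_capacity}. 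I expect the only genuine subtlety to be this positivity check on the limiting denominator, since it is what licenses moving the limit inside the reciprocal; the remainder is a routine limit of a finite sum of products, and the convergence $x(t)\to x^*$ itself rests on the stability already established rather than on anything new.
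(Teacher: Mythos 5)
Your proposal is correct and matches the paper's (implicit) argument: the paper states this lemma without proof, simply substituting the equilibrium state fractions and speed into Eqs.~\eqref{eq:tau_eff}--\eqref{eq:traffic_flow_capacity}, which is exactly the limit-passing you carry out. Your added care about continuity of the reciprocal and strict positivity of the limiting effective headway is a sound (if routine) justification of that substitution rather than a different route.
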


\section{Numerical Experiments}
\vspace{-6pt}
Due to the nonlinearity and state-dependent structure of the generator matrix, we employ numerical methods (i.e., Runge-Kutta fourth-order method \cite{burden1997numerical}) to solve the system and evaluate traffic throughput over time. We begin by selecting a suitable \(k\) to approximate the deterministic lockout period using an Erlang-\(k\) distribution. We then verify the global stability of the system via Proposition~\ref{global}. After that, we examine how leader-dependent transitions trigger mode changes in following PAVs and affect throughput under different traffic scenarios. Finally, we conduct sensitivity analyses on key parameters, such as the permanent HDV rate and the initial fractions of HDVs and AVs among PAVs, to evaluate their effects on throughput. The default parameters used in the numerical experiments are summarized in Table~\ref{tab:parameters}.

\begin{table}[h!]
\centering
\caption{Default parameters for numerical experiments}
\vspace{-5pt}
\begin{tabular}{lll}
\toprule
\textbf{Parameter} & \textbf{Notation}& \textbf{Default Value} \\
\midrule
AV equilibrium time gap& \(\tau_{A_0}\)& 1.0 (s)  \\
HDV equilibrium time gap& \(\tau_{H_0}\)& 1.5 (s)\\
Equilibrium standstill distance for AV& \(L_{A_{0}}\)& 5.0 (m)\\
Equilibrium standstill distance for HDV& \(L_{H_{0}}\)&7.0 (m)\\
Average Speed& \(v\)&$[0,30]$ (m/s)\\
Lockout period for downward transition& \(T^{A}_{\text{lock}}\)&3.0 (s)  \\
Lockout period for upward transition& \(T^{H}_{\text{lock}}\) &3.0 (s)  \\
Simulation duration & \(T\) & 30.0 (s)\\
Discrete time steps & \(h\) & 0.01 (s)  \\
Fraction of permanent HDVs & \(\gamma\)         &$[0,1]$\\
Initial mode fractions of PAVs& $x_{H_0}(0)$/$x_{A_0}(0)$&0.5/0.5\\
HDV→AV rate (leader HDV)  & \(\lambda_1\)      & $(0,1]$ (1/s)\\
AV→HDV rate (leader HDV)  & \(\lambda_2\)      & $(0,1]$ (1/s)\\
HDV→AV rate (leader AV) & \(\lambda_3\)      & $(0,1]$ (1/s)\\
AV→HDV rate (leader AV) & \(\lambda_4\)      & $(0,1]$ (1/s)\\
\bottomrule
\end{tabular}
\label{tab:parameters}
\end{table}
\newpage
\subsection{Erlang-\(k\) Approximation for Deterministic Lockout Time}
To facilitate analysis, we approximate the deterministic lockout period using an Erlang-\(k\) distribution (see Proposition~\ref{prop:erlang_design}). To balance approximation accuracy and computational efficiency, it is important to select an appropriate value for \(k\). We quantify the approximation error using the Wasserstein distance between the deterministic lockout time and its Erlang-\(k\) approximation. While no universal threshold exists, a Wasserstein distance below 0.2 is commonly considered acceptable for distributional similarity \citep{arjovsky2017wasserstein}.

The deterministic lockout time, in terms of its probability density, is modeled as a Dirac delta function. As \(k\) increases, the variance of the Erlang-\(k\) distribution decreases, improving its approximation of the fixed delay (Figure~\ref{fig:k-erlang}(a)). This improvement is also reflected in the decreasing Wasserstein distance, as shown in Figure~\ref{fig:k-erlang}(b), where the red stars indicate the 0.2 threshold. These results suggest that when \(k \geq 200\), the Erlang-\(k\) distribution sufficiently approximates the deterministic lockout period and thus enables the semi-Markov process to be treated within a Markovian framework Accordingly, we set \(k = 200\) as the default value in the following numerical experiment.

\begin{figure}[!h]
    \vspace{-12pt}
    \centering
    \setlength{\abovecaptionskip}{4pt}
    \setlength{\belowcaptionskip}{-6pt}    
    \subfloat[]{%
        \includegraphics[height=0.2\textheight]{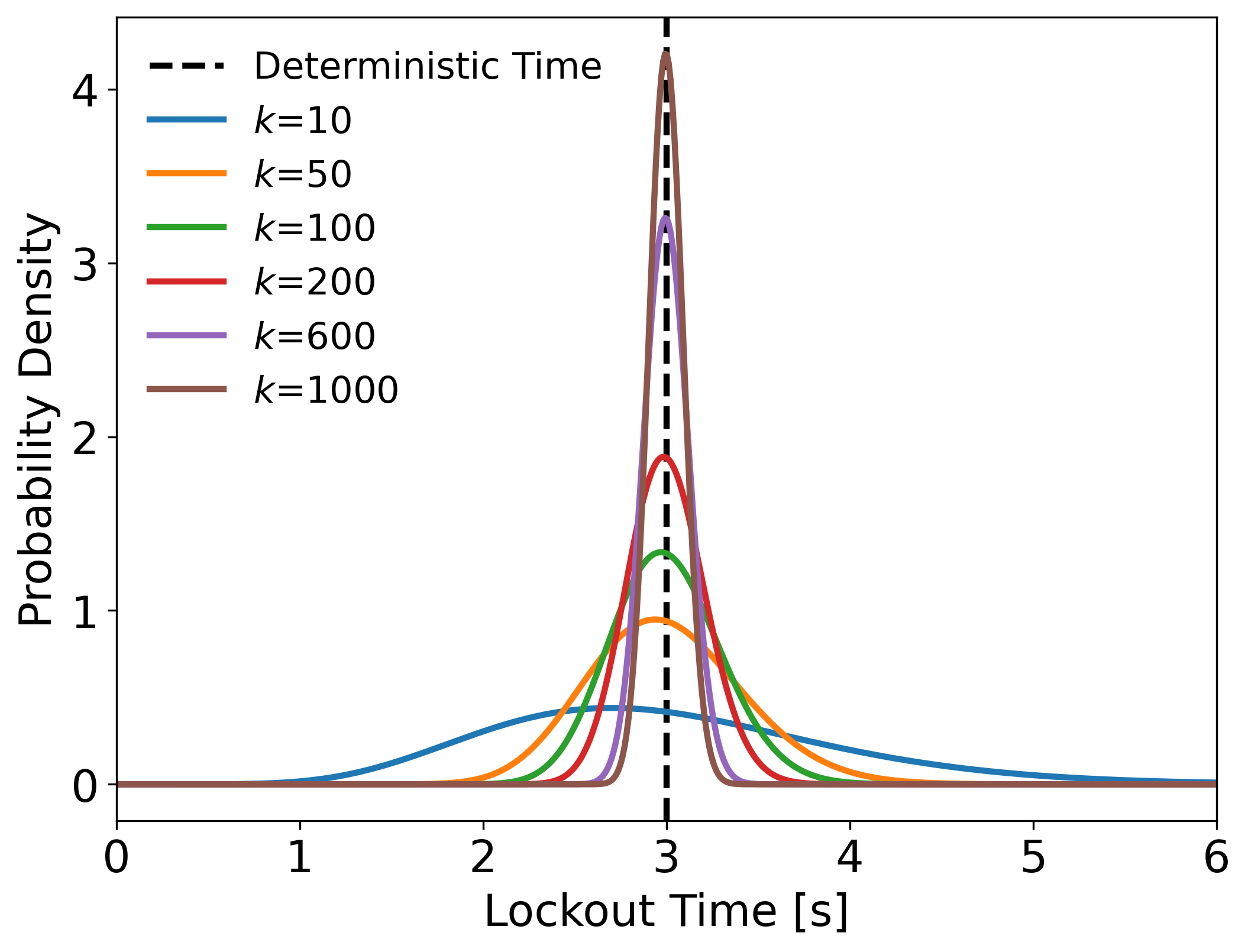}
    }
    \subfloat[]{%
        \includegraphics[height=0.2\textheight]{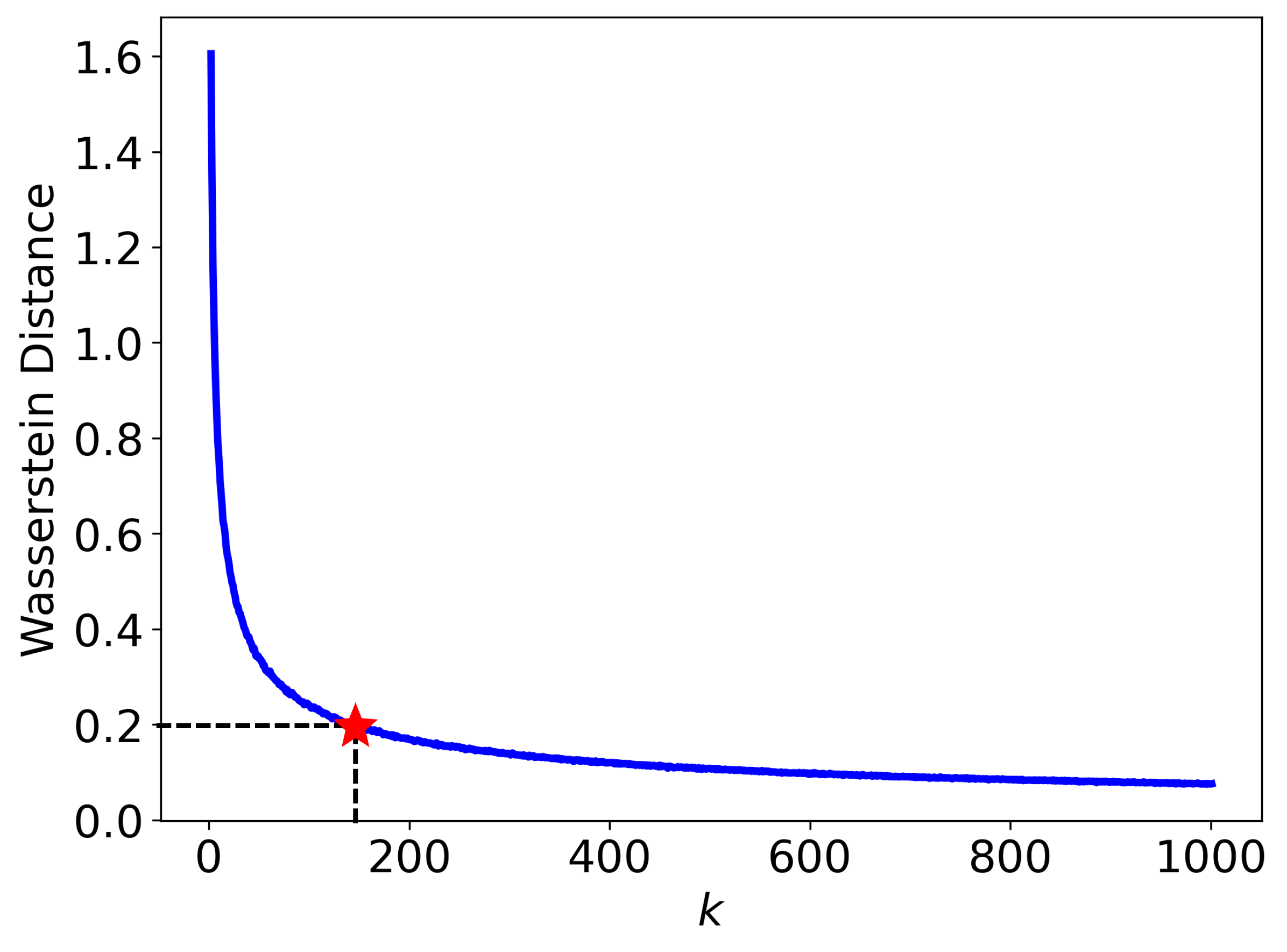}
    }
    \caption{Approximation of deterministic lockout time using Erlang-$k$ distribution.}
    \label{fig:k-erlang}
\end{figure}
\vspace{-6pt}

\subsection{Global Stability of the System}
Given $k$, this subsection further examines whether a sufficient condition that guarantees the global stability of the system holds as Proposition~\ref{global}. Specifically, we employ CVXPY, an open-source Python library for convex optimization \citep{cvx}, to examine global stability under various parameters, including transition rate, the fraction of permanent HDVs, and lockout time. We begin by investigating the impact of different transition rates on system stability, as shown in Figure~\ref{fig:global stable-lambda}(a–c). 

\begin{figure}[!h]
    \vspace{-6pt}
    \centering
    \setlength{\belowcaptionskip}{-8pt}
    \vspace{-12pt}
    \subfloat[$\lambda_4=0.01$]{%
        \includegraphics[height=0.17\textheight]{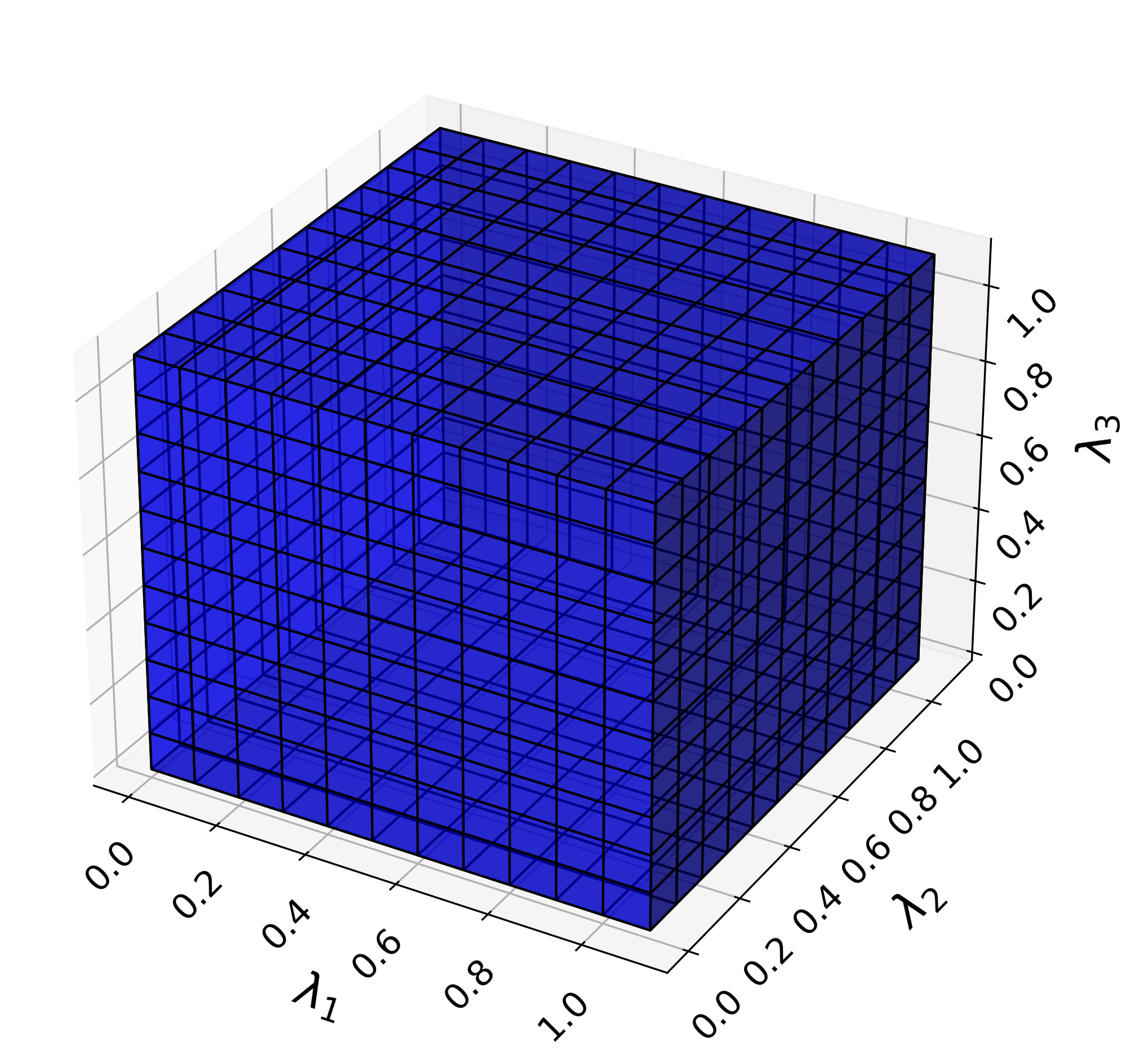}
    }
    \subfloat[$\lambda_4=0.1$]{%
        \includegraphics[height=0.17\textheight]{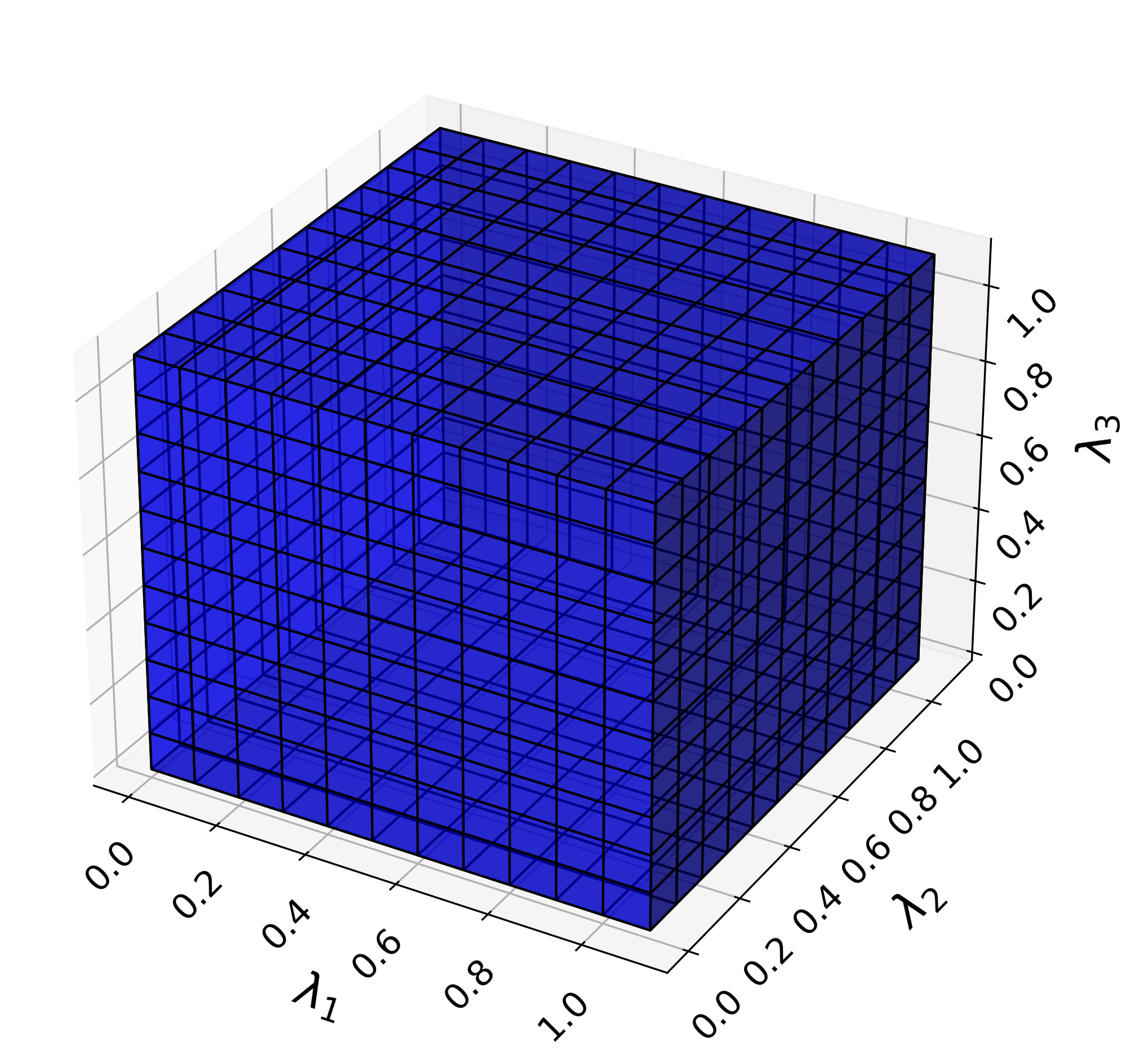}
    }
    \subfloat[$\lambda_4=0.5$]{%
        \includegraphics[height=0.17\textheight]{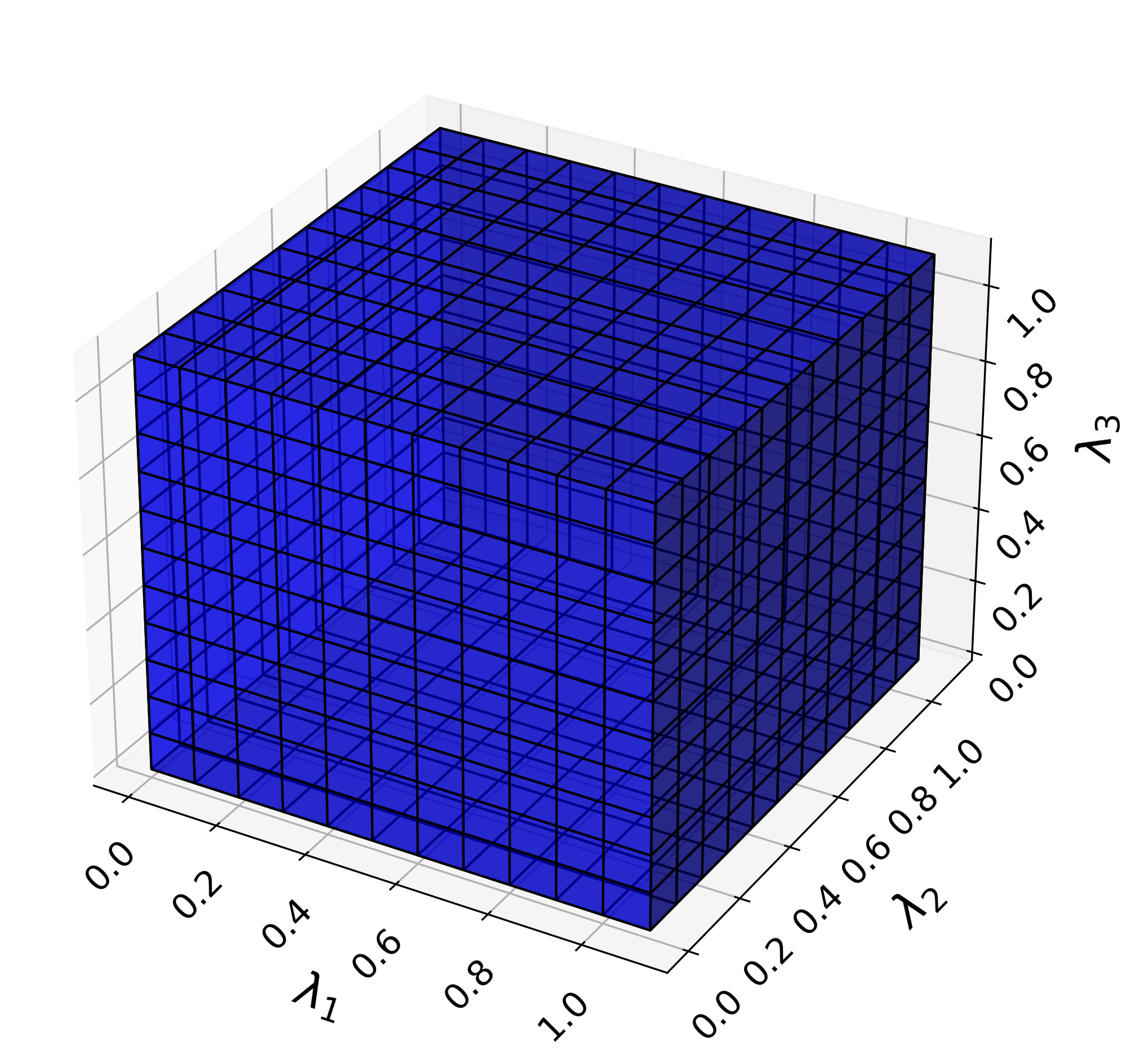}
    }\\[-14pt]
    \subfloat[$\lambda_1=0.1$, $\lambda_2=0.15$\\ $\lambda_3=0.9$, $\lambda_4=0.05$]{%
        \includegraphics[height=0.17\textheight]{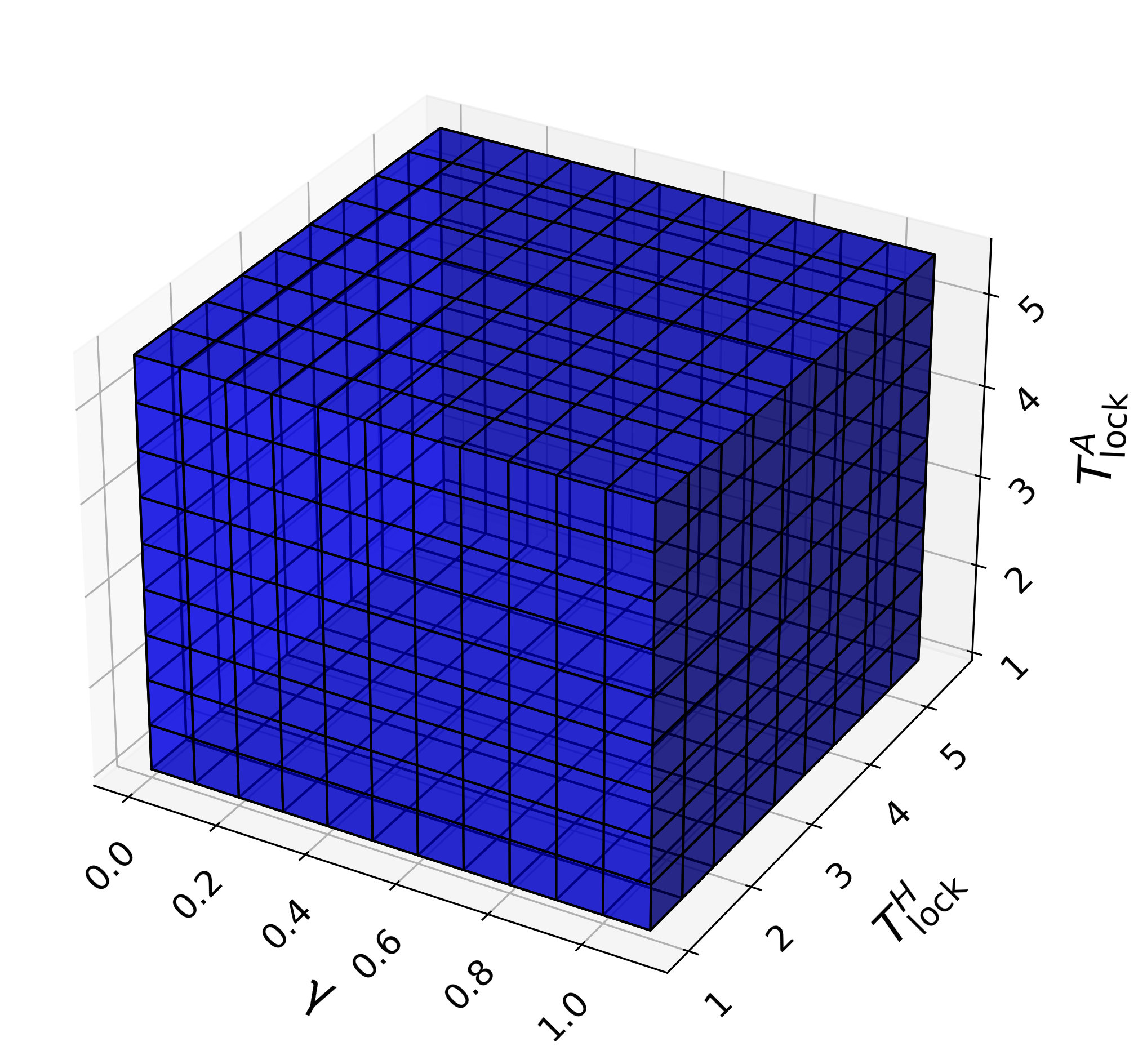}
    }
    \subfloat[$\lambda_1=0.05$, $\lambda_2=0.45$\\ $\lambda_3=0.65$, $\lambda_4=0.05$]{%
        \includegraphics[height=0.17\textheight]{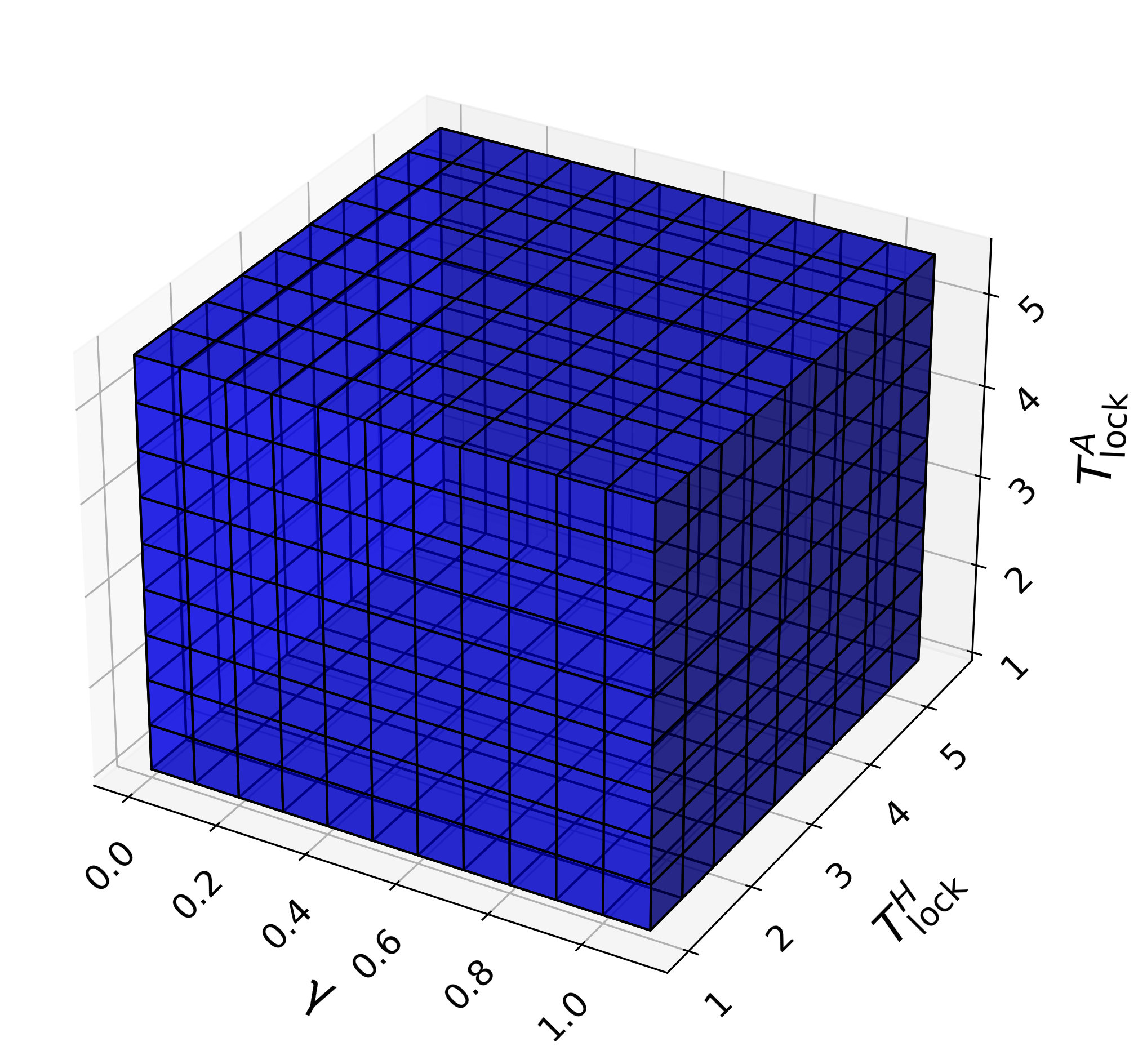}
    }
    \subfloat[$\lambda_1=0.05$, $\lambda_2=0.9$\\ $\lambda_3=0.15$, $\lambda_4=0.1$]{%
        \includegraphics[height=0.17\textheight]{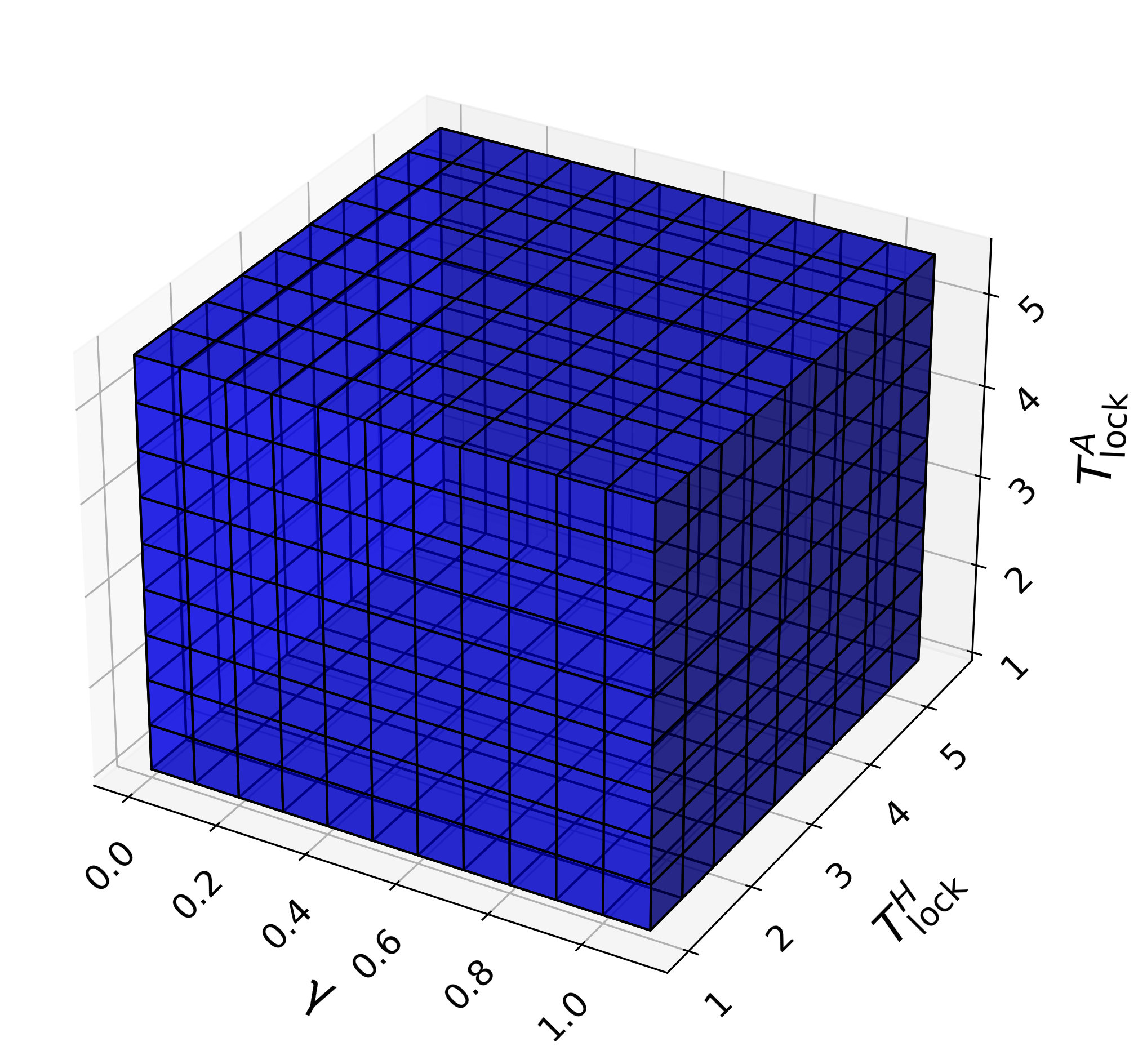}
    }
    \caption{Global stability regions under different transition rates (subfigures (a), (b), and (c) are plotted with \(\gamma = 0.2\), \(T^{H}_{\text{lock}} = 3\,\text{s}\), and \(T^{A}_{\text{lock}} = 3\,\text{s}\), while (d), (e), and (f) illustrate results under different traffic condition)}
    \label{fig:global stable-lambda}
\end{figure}
\vspace{-2pt}

The blue regions indicate the existence of a common positive definite matrix, confirming that the system is globally stable and admits a unique equilibrium regardless of the initial states. As mentioned in Assumption~1 in Section~\ref{assumption}, different transition rates correspond to different traffic conditions. Therefore, the results suggest that the system remains globally stable across a wide range of traffic conditions.

Similar results are observed under different permanent HDV rates and lockout times, further indicating that the system’s global stability holds across a range of parameter settings (Figure~\ref{fig:global stable-lambda}(d–f)). Although the generator matrix grows in dimension due to the semi-Markov to Markov approximation, making theoretical analysis more challenging, our numerical results demonstrate that the system remains globally stable despite its time-varying and leader-dependent structure.

\subsection{Leader-Dependent Dynamics}
\label{leader_dependent}

In this section, we aim to compare the system dynamics under leader-dependent and leader-independent mode transitions to better understand how the presence or absence of leader influence affects the evolution of mode transitions and resulting traffic throughput. Due to the limited availability of real-world datasets, calibrating transition rates remains challenging. Therefore, the transition rates used here are not based on empirical data but are selected to enable a consistent comparison between the two systems under identical settings. To highlight the differences in system dynamics, we compare two representative scenarios: one dominated by downward transitions (from AV to HDV mode) resembling congested or oscillation traffic, and the other dominated by upward transitions (from HDV to AV mode) reflecting free-flow conditions. For the leader-independent system, we set \(\lambda_{H \rightarrow A} = 0.1\), \(\lambda_{A \rightarrow H} = 0.5\) in the first scenario, and \(\lambda_{H \rightarrow A} = 0.5\), \(\lambda_{A \rightarrow H} = 0.1\) in the second. To ensure consistency, the leader-dependent system is configured such that the average of \(\lambda_1\) and \(\lambda_3\) equals \(\lambda_{H \rightarrow A}\), and the average of \(\lambda_2\) and \(\lambda_4\) equals \(\lambda_{A \rightarrow H}\). In both scenarios, the average traffic speed is fixed at \(10\,\text{m/s}\).

Figure~\ref{fig:traffic_down} compares the throughput dynamics of various leader-dependent transition scenarios with a leader-independent baseline under downward transition dominance. Among the leader-dependent settings, the blue curve shows the most pronounced and sustained drop in throughput. This configuration strongly favors downgrades to HDV mode, especially when the leader is an HDV, resulting in a steady-state throughput lower than that of the leader-independent system.  This result reflects a strong cascading effect: when more PAVs switch to HDV mode, they are more likely to become leaders for other vehicles. Since HDV leaders make their followers more likely to downgrade as well, this creates a feedback loop where downgrades keep spreading through the traffic. As a result, most PAVs end up staying in HDV mode, and the overall throughput remains low.

\begin{figure}[!ht]\centering
    \includegraphics[width=1\textwidth]{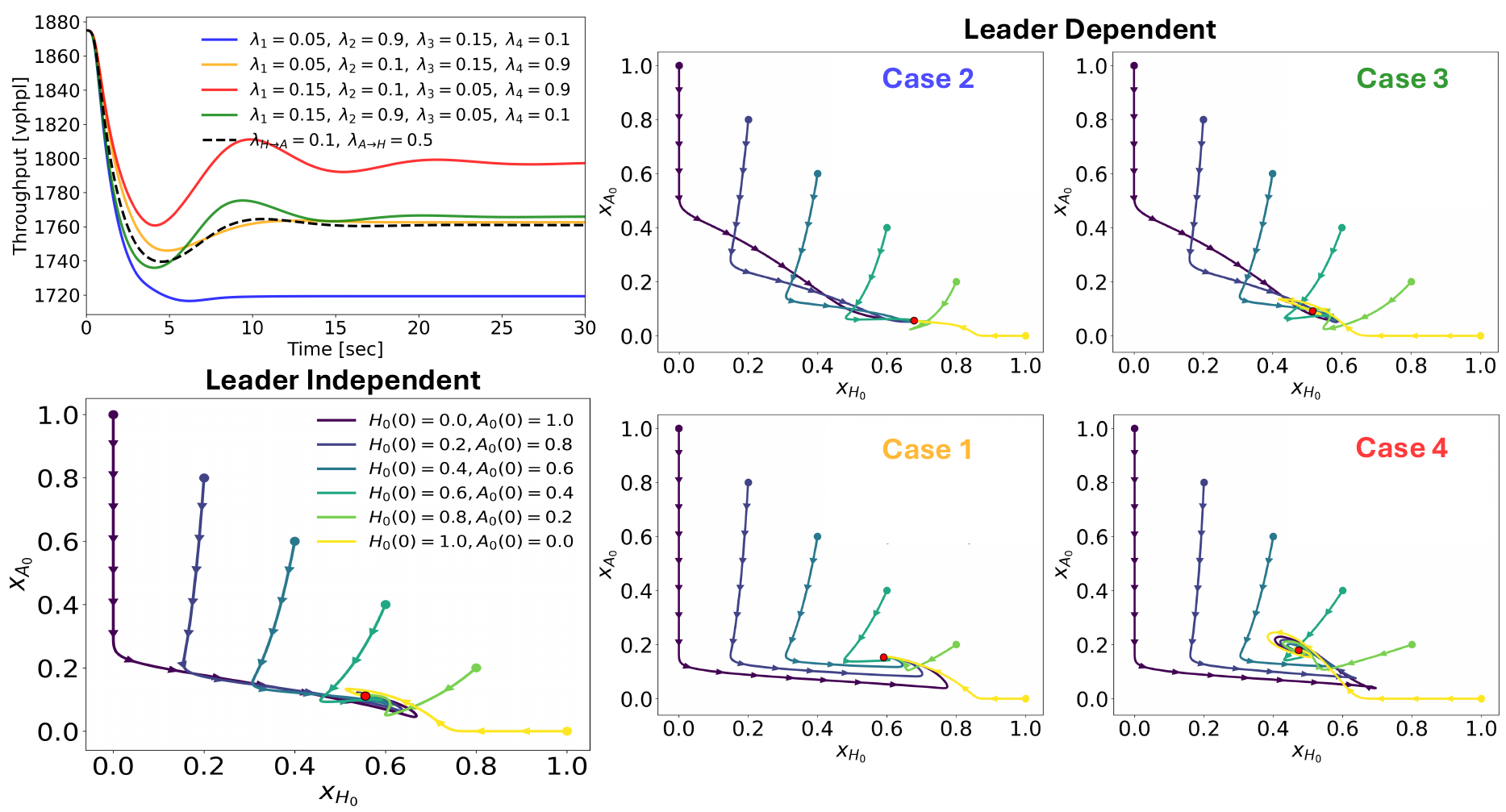}
    \vspace{-12pt} 
    \caption{Throughput evolution and phase diagram of leader-independent vs. leader-dependent systems under downward transition dominance (the case number colors correspond to the line colors shown in the upper-left subfigure)}
    \label{fig:traffic_down}
\end{figure}
\vspace{-6pt}

The red curve illustrates a scenario where PAVs with HDV leaders slightly prefer to transition upward to AV mode, while PAVs with AV leaders strongly tend to transition downward. This asymmetric transition pattern results in larger oscillations during the transient phase. However, because HDV-led PAVs retain a mild upward transition tendency, the cascading downgrade effect is mitigated over time, allowing more vehicles to return to AV mode. As a result, the system achieves a higher steady-state throughput than the leader-independent baseline. Nonetheless, such asymmetric transition behavior is uncommon in downward transition-dominated conditions, where strong downgrade tendencies typically prevail. The green and yellow curves both closely mirror the leader-independent case. Although they are technically leader-dependent, their transition parameters inherently bias them toward downgrading to HDV mode—regardless of the leader type. As a result, the system settles into a similar steady state with a high proportion of PAVs operating in HDV mode, and the leader identity plays a negligible role in shaping the overall dynamics. 

The phase diagrams illustrate that, across both leader independent and leader dependent systems, and even within the leader dependent system under different transition parameters, trajectories originating from different initial states consistently converge to a unique equilibrium, further confirming global stability. However, the convergence paths differ significantly, highlighting that transient dynamics are sensitive to both the transition structure and system parameters.

Under the upward dominance scenario (free flow condition, Figure~\ref{fig:traffic_up}), a cascading effect occurs when PAVs follow leaders of the dominant mode and prefer to transition in the same direction, as seen in the blue curve where \(\lambda_3 = 0.9\) encourages AV-led PAVs to remain in or switch to AV mode. This alignment accelerates convergence toward an AV-dominated equilibrium. In contrast, the red curve illustrates an asymmetric transition pattern: HDV-led PAVs prefer to switch to AV mode, while AV-led PAVs slightly prefer to downgrade. This mismatch disrupts coordinated transitions and results in larger fluctuations before the system stabilizes. The yellow and green curves follow a consistent pattern in which PAVs prefer upward transitions regardless of leader type. Consequently, both exhibit similar dynamics to the leader-independent case under upward dominance, with smooth convergence and limited fluctuation.

\begin{figure}[!ht]\centering
    \includegraphics[width=1\textwidth]{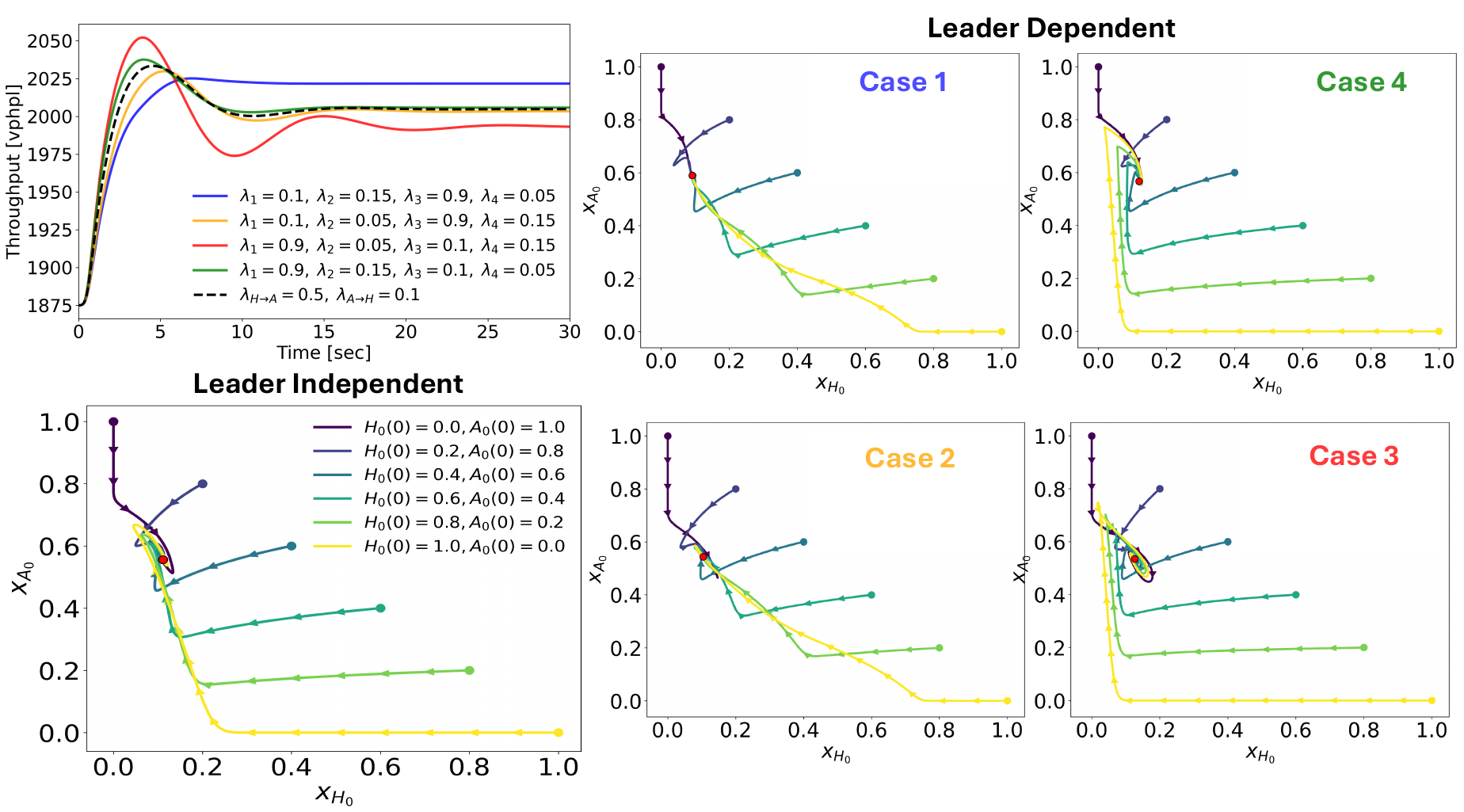}
    \vspace{-12pt} 
    \caption{Throughput evolution and phase diagram of leader-independent vs. leader-dependent systems under upward transition dominance}
    \label{fig:traffic_up}
\end{figure}
\vspace{-6pt}

Compared to the downward transition scenario, the difference in steady-state throughput between the cascading effect and the leader-independent baseline is noticeably smaller under upward transition dominance. This outcome can be primarily attributed to a ceiling effect. Even when most PAVs operate in AV mode, the presence of permanent HDVs ($\gamma=0.2$) imposes a hard limit on the maximum achievable throughput because these vehicles maintain longer headways. As a result, while cascading upward transitions improve efficiency, the benefit is inherently capped. In contrast, cascading downward transitions cause a rapid increase in PAVs operating in HDV mode, which significantly degrades system efficiency and amplifies the performance difference relative to the leader independent system

\subsection{Throughput Analysis Using NGSIM Data}

As discussed in Section~\ref{leader_dependent}, the cascading effect is more pronounced under downward transition dominance. To examine this phenomenon under real-world conditions, we apply our model to the NGSIM dataset \citep{ngsimdatasetus101, ngsimdatasetus80}, which provides empirical traffic speed profiles (Figure \ref{NGSIM_speed}). Specifically, the speed trajectory from the dataset is treated as the time-varying average traffic speed \(v(t)\), and is then integrated into the throughput analysis. This setup allows us to evaluate how realistic speed fluctuations influence capacity, given that throughput is closely linked to prevailing traffic speed.

\begin{figure}[!ht]\centering
    \includegraphics[width=0.75\textwidth]{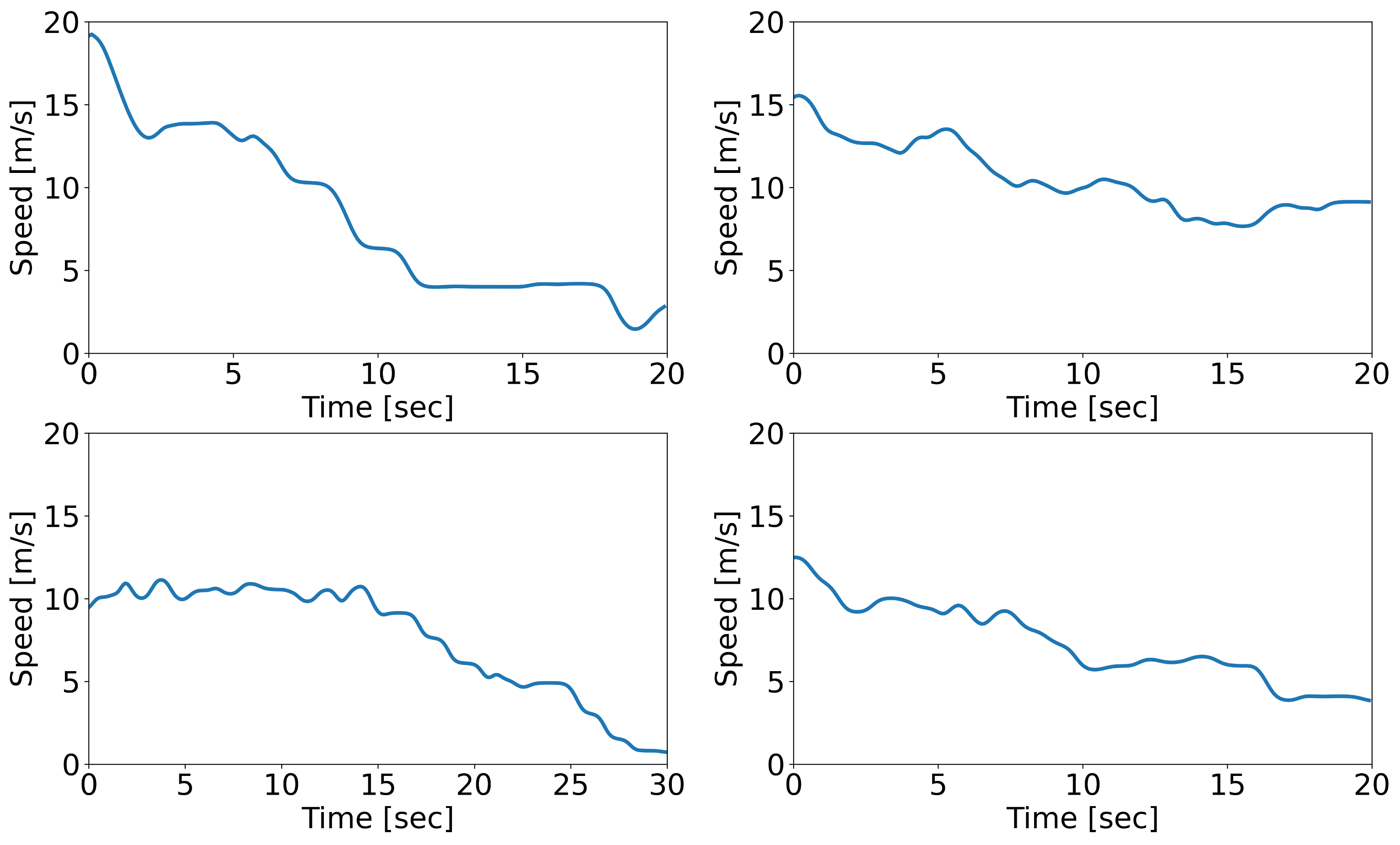}
    \vspace{-12pt} 
    \caption{Selected speed profile in the NGSIM dataset}
    \label{NGSIM_speed}
\end{figure}
\vspace{-6pt}

In Figure \ref{NGSIM_cap}, we find that, under these empirical conditions, the impact of mode transitions becomes even more significant, with the maximum throughput difference reaching approximately 100~vphpl. Notably, transition settings that induce cascading effects (blue line) consistently result in lower throughput throughout the time horizon.

\begin{figure}[!ht]\centering
    \includegraphics[width=1\textwidth]{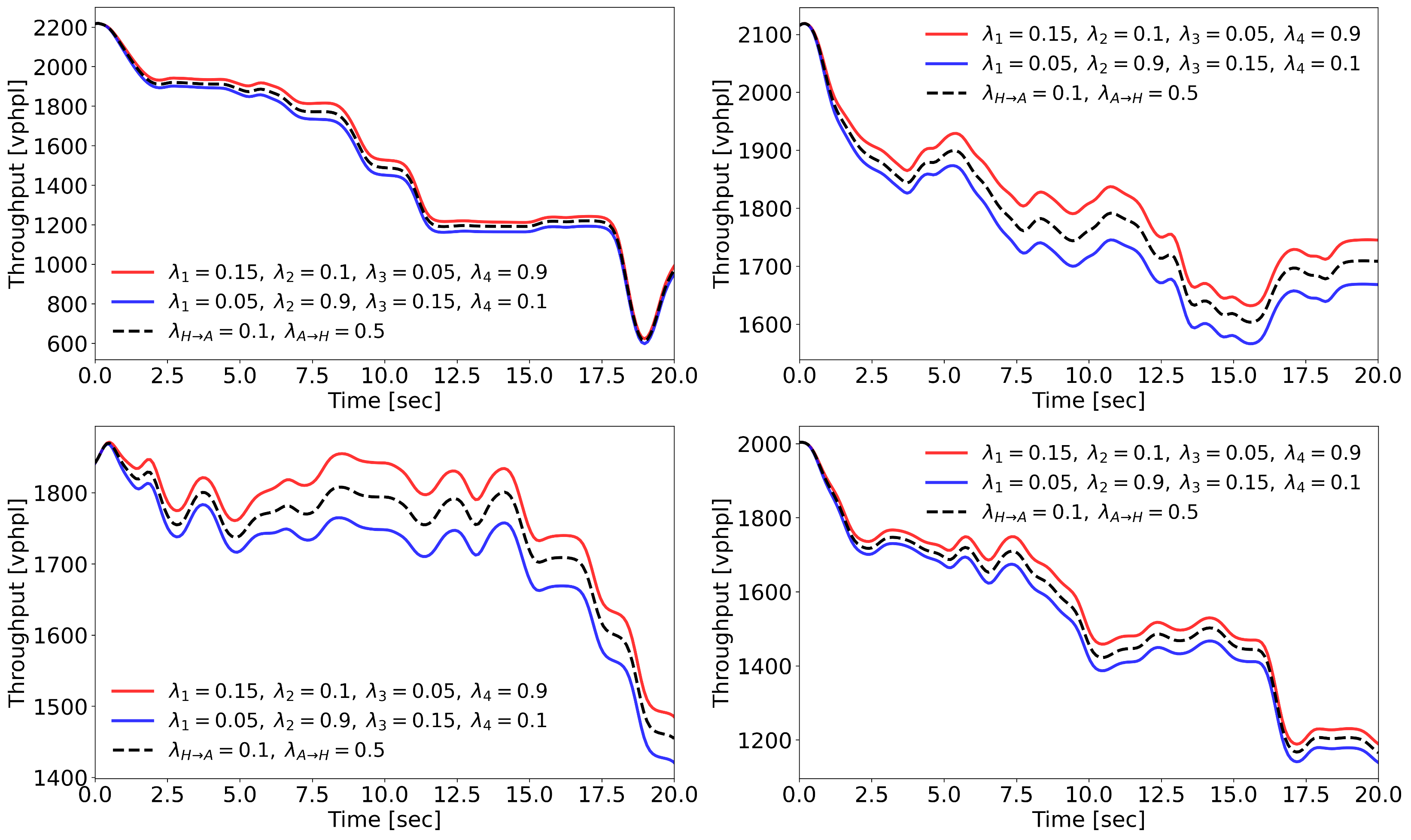}
    \vspace{-12pt} 
    \caption{Throughput evolution under empirical speed profiles from the NGSIM dataset }
    \label{NGSIM_cap}
\end{figure}
\vspace{-6pt}
\newpage

\subsection{Sensitivity Analysis}
To gain a more comprehensive understanding of the leader-dependent system, we conduct sensitivity analyses on key parameters including the permanent HDV rate and the initial fraction of AV and HDV modes among PAVs.

\subsubsection{Permanent HDV Rate ($\gamma$)}
Throughput fluctuation is quantified by the \(L_2\) norm of the deviation from steady‑state throughput. Larger values indicate more pronounced fluctuation over time. For a fair comparison with the leader‑independent system, we set the average of \(\lambda_1\) and \(\lambda_3\) equal to \(\lambda_{H\rightarrow A}\) and the average of \(\lambda_2\) and \(\lambda_4\) equal to \(\lambda_{A\rightarrow H}\). Once \(\lambda_1\) and \(\lambda_2\) are chosen, \(\lambda_3\) and \(\lambda_4\) are determined accordingly.  In the diverging colour maps, the leader-independent baseline is set as the transition point in the color scale.

Under downward transition dominance (Figure~\ref{fig:HM_downward}), we observe consistent behavioral patterns across different levels of permanent HDV penetration. The leader-dependent cases in the top-left and bottom-right corners of the throughput and fluctuation heatmaps correspond to the blue and red curves in Figure~\ref{fig:traffic_down}, respectively. These settings exemplify two key phenomena identified earlier: cascading downgrades and strong transitional fluctuations due to asymmetric preferences. Across all values of \(\gamma\), the cascading effect persists—when transition preferences align with the dominant HDV mode, PAVs are increasingly drawn into HDV mode. This alignment leads to a gradual but sustained decrease in throughput. However, the severity of throughput fluctuation declines as \(\gamma\) increases. For example, the throughput flcutation range of throughput decreases from 120~vphpl to 92~vphpl as \(\gamma\) increases from 0.2 to 0.5, and then to 47~vphpl, when \(\gamma\) reaches 0.8. This trend occurs because higher permanent HDV rates reduce the fraction of PAVs, thereby naturally dampening the extent of throughput fluctuation and weakening the cascading dynamics.

\begin{figure}[!ht]\centering
    \includegraphics[width=1\textwidth]{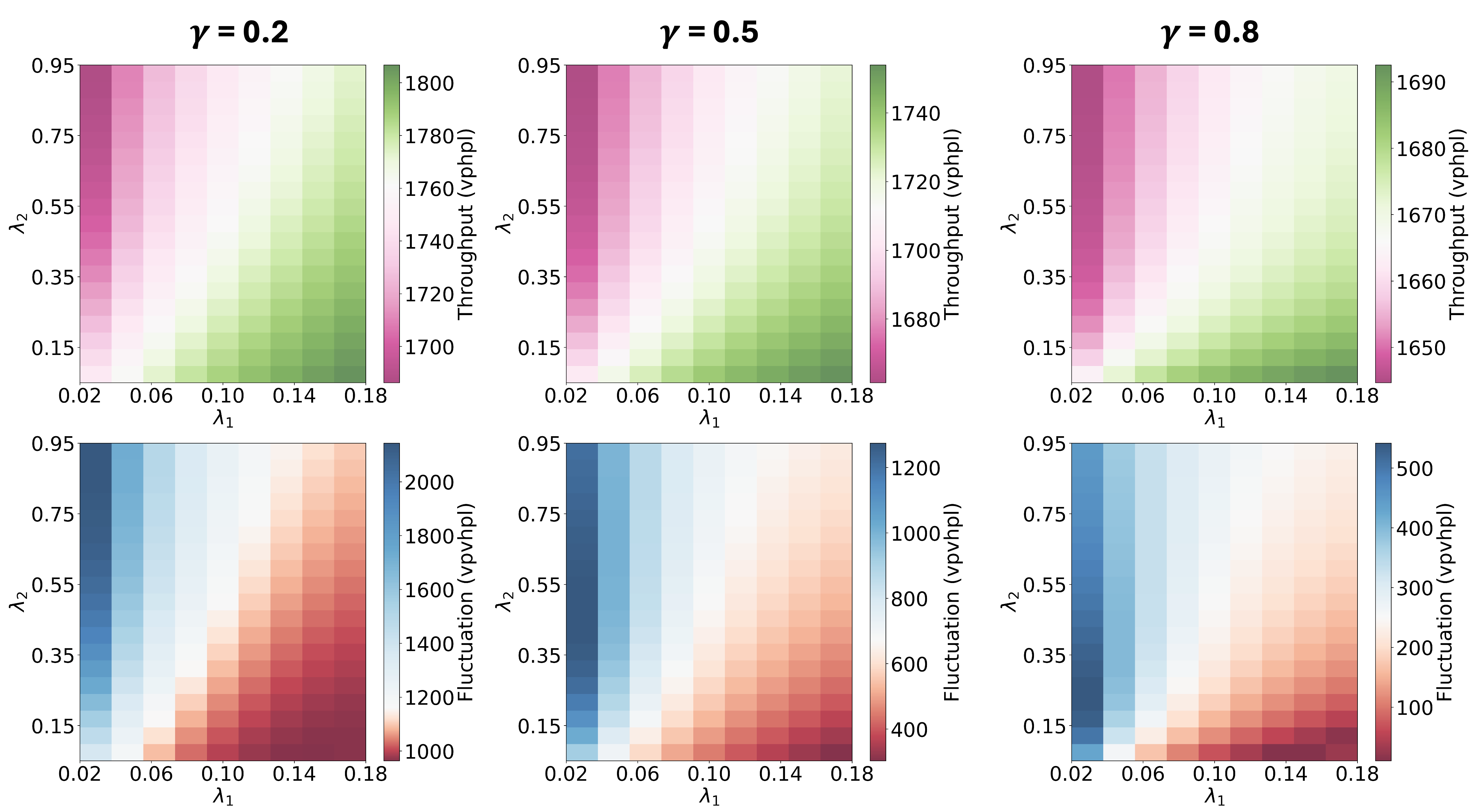}
    \vspace{-12pt} 
    \caption{Throughput and fluctuation under downward transition dominance ($\bar{\lambda}_{H \rightarrow A} = 0.1$, $\bar{\lambda}_{A \rightarrow H} = 0.5$) under different permanent HDV rate}
    \label{fig:HM_downward}
\end{figure}
\vspace{-6pt}

\subsubsection{Initial fraction of AV and HDV modes among PAVs}

Under the cascading downgrade scenario (Figure~\ref{fig:Inital}(a)), although the cascading effect leads to a lower steady-state throughput, it also results in faster convergence to equilibrium compared to the leader-independent case (Figure~\ref{fig:Inital}(c)). Naturally, the greater the deviation of the initial state from equilibrium, the more pronounced the fluctuations become, leading to longer convergence times.

In contrast, the scenario characterized by strong fluctuations due to asymmetric transition preferences (Figure~\ref{fig:Inital}(b)) exhibits persistent overshoots, regardless of the initial state. The severity of these fluctuations can reach up to 123~vphpl and is generally more intense than in the leader-independent baseline  (Figure~\ref{fig:Inital}(d)) . These findings highlight that such asymmetry in transition behavior can significantly amplify traffic instability.

These dynamic behaviors underscore the critical role of the initial PAV mode fraction. When the traffic state undergoes abrupt changes---such as a shift from free-flow to congestion, which frequently occurs in merging or diverging highway sections---the system is more likely to exhibit intensified transitional responses. For instance, if the initial composition is dominated by AV-mode PAVs and congestion arises, transition preferences may rapidly shift toward HDV mode. This shift creates a large initial deviation from the new equilibrium, intensifying the throughput fluctuations. Therefore, traffic fluctuations arise not only from the transition rate settings but also from the initial fraction of AV and HDV modes between PAVs, particularly in response to sudden changes in traffic conditions.

\begin{figure}[!ht]\centering
    \includegraphics[width=1\textwidth]{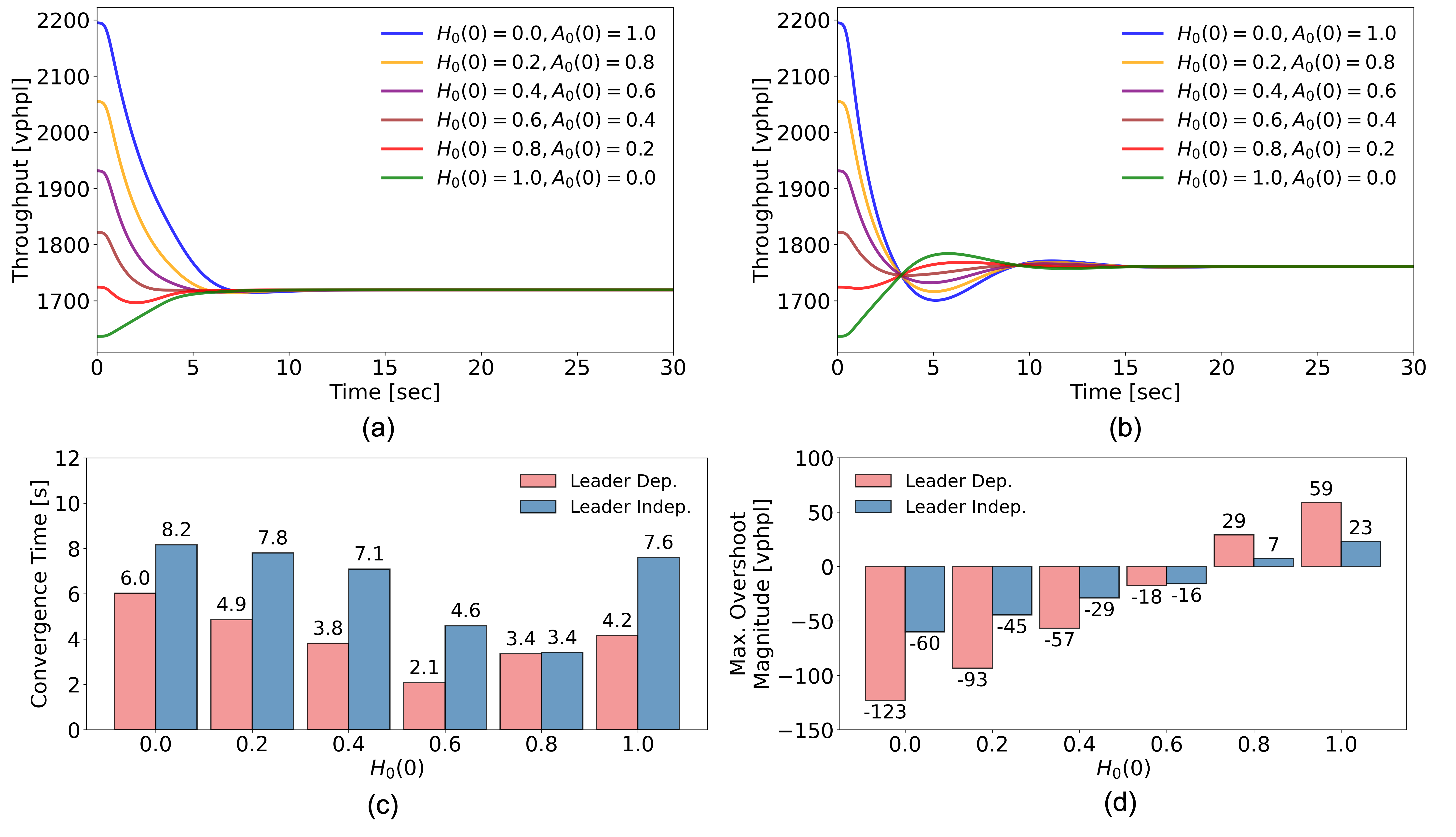}
    \vspace{-12pt} 
    \caption{Throughput evolution under different transition rate settings under downward transition dominance, respectively; (c) presents convergence time for (a); and (d) shows overshoot magnitude for (b))}
    \label{fig:Inital}
\end{figure}
\vspace{-6pt}
\newpage

\section{Conclusion}
\vspace{-6pt}

This paper presents a novel traffic throughput modeling framework for partially automated traffic that captures cascading driver interventions through a leader-dependent mode-switching mechanism. Unlike traditional mixed traffic models, our framework systematically considers transitions between AVs and HDVs—including the associated transitional behaviors and lockout processes—making it applicable to vehicles with varying automation levels and different AV market penetration rates. To manage the system's complexity, we introduce a continuous semi-Markov chain with lockout constraints, further formulating it as a nonlinear dynamical system with a phase-type distribution approximation. Consequently, the transitional throughput can be effectively solved using standard nonlinear system solvers such as the Runge-Kutta algorithm. Moreover, we demonstrate that the system can be viewed as a piecewise affine, state-dependent linear model, and we rigorously prove the existence of an equilibrium state, as well as global stability, using Brouwer's Fixed Point Theorem,  and the 1D Uncertainty Polytopes Theorem.

Numerical experiments validate the theoretical findings, showing that leader-dependent mode transitions help explain cascading impacts in mixed traffic. These cascading effects occur when PAVs tend to follow the mode of their leaders, especially when their leader matches the dominant traffic mode. This leads to widespread transitions in the same direction. On the other hand, strong fluctuations in throughput happen when PAVs react differently depending on whether they follow AV or HDV leaders. This mismatch disrupts coordination and increases instability. Sensitivity analysis shows that the permanent HDV rate does not change these patterns but affects how much throughput varies. When there are more permanent HDVs, the fluctuation is reduced because fewer PAVs can switch modes. The initial distribution of PAV modes also matters—larger differences from the steady state lead to stronger transitional responses, especially when traffic conditions change suddenly.

Future work will focus on validating the proposed framework using real-world traffic data from partially automated vehicles. This includes analyzing empirical mode-switching behaviors and refining model parameters to enhance accuracy. Additionally, integrating more complex driver decision-making factors and testing the framework in diverse traffic scenarios will provide deeper insights into real-world application.

\section*{Acknowledgment}
\vspace{-6pt}
 This work is supported by the US National Science Foundation (Award CMMI 2129765).

 \bibliographystyle{elsarticle-harv} 
 \bibliography{cas-refs}

\begin{thebibliography}{53}
\expandafter\ifx\csname natexlab\endcsname\relax\def\natexlab#1{#1}\fi
\providecommand{\url}[1]{\texttt{#1}}
\providecommand{\href}[2]{#2}
\providecommand{\path}[1]{#1}
\providecommand{\DOIprefix}{doi:}
\providecommand{\ArXivprefix}{arXiv:}
\providecommand{\URLprefix}{URL: }
\providecommand{\Pubmedprefix}{pmid:}
\providecommand{\doi}[1]{\href{http://dx.doi.org/#1}{\path{#1}}}
\providecommand{\Pubmed}[1]{\href{pmid:#1}{\path{#1}}}
\providecommand{\bibinfo}[2]{#2}
\ifx\xfnm\relax \def\xfnm[#1]{\unskip,\space#1}\fi
\bibitem[{Ahn and Cassidy(2007)}]{ahn2007freeway}
\bibinfo{author}{Ahn, S.}, \bibinfo{author}{Cassidy, M.J.}, \bibinfo{year}{2007}.
\newblock \bibinfo{title}{Freeway traffic oscillations and vehicle lane-change maneuvers}.
\newblock \bibinfo{journal}{Transportation and Traffic Theory} \bibinfo{volume}{1}, \bibinfo{pages}{691--710}.
\bibitem[{Anderson(2012)}]{anderson2012continuous}
\bibinfo{author}{Anderson, W.J.}, \bibinfo{year}{2012}.
\newblock \bibinfo{title}{Continuous-time Markov chains: An applications-oriented approach}.
\newblock \bibinfo{publisher}{Springer Science \& Business Media}.
\bibitem[{Arjovsky et~al.(2017)Arjovsky, Chintala and Bottou}]{arjovsky2017wasserstein}
\bibinfo{author}{Arjovsky, M.}, \bibinfo{author}{Chintala, S.}, \bibinfo{author}{Bottou, L.}, \bibinfo{year}{2017}.
\newblock \bibinfo{title}{Wasserstein generative adversarial networks}, in: \bibinfo{booktitle}{International conference on machine learning}, \bibinfo{organization}{PMLR}. pp. \bibinfo{pages}{214--223}.
\bibitem[{Bellem et~al.(2018)Bellem, Thiel, Schrauf and Krems}]{bellem2018comfort}
\bibinfo{author}{Bellem, H.}, \bibinfo{author}{Thiel, B.}, \bibinfo{author}{Schrauf, M.}, \bibinfo{author}{Krems, J.F.}, \bibinfo{year}{2018}.
\newblock \bibinfo{title}{Comfort in automated driving: An analysis of preferences for different automated driving styles and their dependence on personality traits}.
\newblock \bibinfo{journal}{Transportation research part F: traffic psychology and behaviour} \bibinfo{volume}{55}, \bibinfo{pages}{90--100}.
\bibitem[{Bladt(2005)}]{bladt2005review}
\bibinfo{author}{Bladt, M.}, \bibinfo{year}{2005}.
\newblock \bibinfo{title}{A review on phase-type distributions and their use in risk theory}.
\newblock \bibinfo{journal}{ASTIN Bulletin: The Journal of the IAA} \bibinfo{volume}{35}, \bibinfo{pages}{145--161}.
\bibitem[{Brouwer(1911)}]{brouwer1911abbildung}
\bibinfo{author}{Brouwer, L.E.J.}, \bibinfo{year}{1911}.
\newblock \bibinfo{title}{{\"U}ber abbildung von mannigfaltigkeiten}.
\newblock \bibinfo{journal}{Mathematische annalen} \bibinfo{volume}{71}, \bibinfo{pages}{97--115}.
\bibitem[{Burden and Faires(1997)}]{burden1997numerical}
\bibinfo{author}{Burden, R.L.}, \bibinfo{author}{Faires, J.D.}, \bibinfo{year}{1997}.
\newblock \bibinfo{title}{Numerical analysis, brooks}.
\bibitem[{del Castillo(2001)}]{del2001propagation}
\bibinfo{author}{del Castillo, J.M.}, \bibinfo{year}{2001}.
\newblock \bibinfo{title}{Propagation of perturbations in dense traffic flow: a model and its implications}.
\newblock \bibinfo{journal}{Transportation Research Part B: Methodological} \bibinfo{volume}{35}, \bibinfo{pages}{367--389}.
\bibitem[{Chen et~al.(2017)Chen, Ahn, Chitturi and Noyce}]{chen2017towards}
\bibinfo{author}{Chen, D.}, \bibinfo{author}{Ahn, S.}, \bibinfo{author}{Chitturi, M.}, \bibinfo{author}{Noyce, D.A.}, \bibinfo{year}{2017}.
\newblock \bibinfo{title}{Towards vehicle automation: Roadway capacity formulation for traffic mixed with regular and automated vehicles}.
\newblock \bibinfo{journal}{Transportation research part B: methodological} \bibinfo{volume}{100}, \bibinfo{pages}{196--221}.
\bibitem[{Chen et~al.(2014)Chen, Ahn, Laval and Zheng}]{chen2014periodicity}
\bibinfo{author}{Chen, D.}, \bibinfo{author}{Ahn, S.}, \bibinfo{author}{Laval, J.}, \bibinfo{author}{Zheng, Z.}, \bibinfo{year}{2014}.
\newblock \bibinfo{title}{On the periodicity of traffic oscillations and capacity drop: The role of driver characteristics}.
\newblock \bibinfo{journal}{Transportation research part B: methodological} \bibinfo{volume}{59}, \bibinfo{pages}{117--136}.
\bibitem[{Chen et~al.(2012)Chen, Laval, Zheng and Ahn}]{chen2012behavioral}
\bibinfo{author}{Chen, D.}, \bibinfo{author}{Laval, J.}, \bibinfo{author}{Zheng, Z.}, \bibinfo{author}{Ahn, S.}, \bibinfo{year}{2012}.
\newblock \bibinfo{title}{A behavioral car-following model that captures traffic oscillations}.
\newblock \bibinfo{journal}{Transportation research part B: methodological} \bibinfo{volume}{46}, \bibinfo{pages}{744--761}.
\bibitem[{Chen et~al.(2023)Chen, Gong, Wang, Wang, Zhou and Ran}]{chen2023stochastic}
\bibinfo{author}{Chen, T.}, \bibinfo{author}{Gong, S.}, \bibinfo{author}{Wang, M.}, \bibinfo{author}{Wang, X.}, \bibinfo{author}{Zhou, Y.}, \bibinfo{author}{Ran, B.}, \bibinfo{year}{2023}.
\newblock \bibinfo{title}{Stochastic capacity analysis for a distributed connected automated vehicle virtual car-following control strategy}.
\newblock \bibinfo{journal}{Transportation research part C: emerging technologies} \bibinfo{volume}{152}, \bibinfo{pages}{104176}.
\bibitem[{Chen et~al.(2024)Chen, Zhang, Bai, Jiang, Ding and Wei}]{chen2024sigmoid}
\bibinfo{author}{Chen, X.}, \bibinfo{author}{Zhang, W.}, \bibinfo{author}{Bai, H.}, \bibinfo{author}{Jiang, R.}, \bibinfo{author}{Ding, H.}, \bibinfo{author}{Wei, L.}, \bibinfo{year}{2024}.
\newblock \bibinfo{title}{A sigmoid-based car-following model to improve acceleration stability in traffic oscillation and following failure in free flow}.
\newblock \bibinfo{journal}{IEEE Transactions on Intelligent Transportation Systems} .
\bibitem[{Colley et~al.(2022)Colley, Hummler and Rukzio}]{colley2022effects}
\bibinfo{author}{Colley, M.}, \bibinfo{author}{Hummler, C.}, \bibinfo{author}{Rukzio, E.}, \bibinfo{year}{2022}.
\newblock \bibinfo{title}{Effects of mode distinction, user visibility, and vehicle appearance on mode confusion when interacting with highly automated vehicles}.
\newblock \bibinfo{journal}{Transportation research part F: traffic psychology and behaviour} \bibinfo{volume}{89}, \bibinfo{pages}{303--316}.
\bibitem[{Colyar and Halkias(2006)}]{ngsimdatasetus80}
\bibinfo{author}{Colyar, J.}, \bibinfo{author}{Halkias, J.}, \bibinfo{year}{2006}.
\newblock \bibinfo{title}{{U.S. Highway 80 dataset}}.
\newblock \bibinfo{type}{Dataset}. Federal Highway Administration.
\newblock \bibinfo{note}{NGSIM}.
\bibitem[{Colyar and Halkias(2007)}]{ngsimdatasetus101}
\bibinfo{author}{Colyar, J.}, \bibinfo{author}{Halkias, J.}, \bibinfo{year}{2007}.
\newblock \bibinfo{title}{{U.S. Highway 101 dataset}}.
\newblock \bibinfo{type}{Dataset}. Federal Highway Administration.
\newblock \bibinfo{note}{NGSIM}.
\bibitem[{Correa et~al.(2018)Correa, Maerivoet, Mintsis, Wijbenga, Sepulcre, Rondinone, Schindler and Gozalvez}]{correa2018management}
\bibinfo{author}{Correa, A.}, \bibinfo{author}{Maerivoet, S.}, \bibinfo{author}{Mintsis, E.}, \bibinfo{author}{Wijbenga, A.}, \bibinfo{author}{Sepulcre, M.}, \bibinfo{author}{Rondinone, M.}, \bibinfo{author}{Schindler, J.}, \bibinfo{author}{Gozalvez, J.}, \bibinfo{year}{2018}.
\newblock \bibinfo{title}{Management of transitions of control in mixed traffic with automated vehicles}, in: \bibinfo{booktitle}{2018 16th International Conference on Intelligent Transportation Systems Telecommunications (ITST)}, \bibinfo{organization}{IEEE}. pp. \bibinfo{pages}{1--7}.
\bibitem[{CVX~Research(2012)}]{cvx}
\bibinfo{author}{CVX~Research, I.}, \bibinfo{year}{2012}.
\newblock \bibinfo{title}{{CVX}: Matlab software for disciplined convex programming, version 2.0}.
\newblock \bibinfo{howpublished}{\url{https://cvxr.com/cvx}}.
\bibitem[{David and Larry(1987)}]{david1987least}
\bibinfo{author}{David, A.}, \bibinfo{author}{Larry, S.}, \bibinfo{year}{1987}.
\newblock \bibinfo{title}{The least variable phase type distribution is erlang}.
\newblock \bibinfo{journal}{Stochastic Models} \bibinfo{volume}{3}, \bibinfo{pages}{467--473}.
\bibitem[{Eriksson and Stanton(2017)}]{eriksson2017takeover}
\bibinfo{author}{Eriksson, A.}, \bibinfo{author}{Stanton, N.A.}, \bibinfo{year}{2017}.
\newblock \bibinfo{title}{Takeover time in highly automated vehicles: noncritical transitions to and from manual control}.
\newblock \bibinfo{journal}{Human factors} \bibinfo{volume}{59}, \bibinfo{pages}{689--705}.
\bibitem[{Gershon et~al.(2021)Gershon, Seaman, Mehler, Reimer and Coughlin}]{gershon2021driver}
\bibinfo{author}{Gershon, P.}, \bibinfo{author}{Seaman, S.}, \bibinfo{author}{Mehler, B.}, \bibinfo{author}{Reimer, B.}, \bibinfo{author}{Coughlin, J.}, \bibinfo{year}{2021}.
\newblock \bibinfo{title}{Driver behavior and the use of automation in real-world driving}.
\newblock \bibinfo{journal}{Accident Analysis \& Prevention} \bibinfo{volume}{158}, \bibinfo{pages}{106217}.
\bibitem[{Ghiasi et~al.(2017)Ghiasi, Hussain, Qian and Li}]{ghiasi2017mixed}
\bibinfo{author}{Ghiasi, A.}, \bibinfo{author}{Hussain, O.}, \bibinfo{author}{Qian, Z.S.}, \bibinfo{author}{Li, X.}, \bibinfo{year}{2017}.
\newblock \bibinfo{title}{A mixed traffic capacity analysis and lane management model for connected automated vehicles: A markov chain method}.
\newblock \bibinfo{journal}{Transportation Research Part B: Methodological} \bibinfo{volume}{106}, \bibinfo{pages}{266--292}.
\bibitem[{Grant and Boyd(2008)}]{gb08}
\bibinfo{author}{Grant, M.}, \bibinfo{author}{Boyd, S.}, \bibinfo{year}{2008}.
\newblock \bibinfo{title}{Graph implementations for nonsmooth convex programs}, in: \bibinfo{editor}{Blondel, V.}, \bibinfo{editor}{Boyd, S.}, \bibinfo{editor}{Kimura, H.} (Eds.), \bibinfo{booktitle}{Recent Advances in Learning and Control}. \bibinfo{publisher}{Springer-Verlag Limited}. Lecture Notes in Control and Information Sciences, pp. \bibinfo{pages}{95--110}.
\newblock \bibinfo{note}{\url{http://stanford.edu/~boyd/graph_dcp.html}}.
\bibitem[{ISO(2019)}]{iso2019pas}
\bibinfo{author}{ISO}, \bibinfo{year}{2019}.
\newblock \bibinfo{title}{Iso/pas 21448-road vehicles-safety of the intended functionality}.
\newblock \bibinfo{journal}{International Organization for Standardization} .
\bibitem[{Knoop et~al.(2008)Knoop, Hoogendoorn and Van~Zuylen}]{knoop2008capacity}
\bibinfo{author}{Knoop, V.L.}, \bibinfo{author}{Hoogendoorn, S.P.}, \bibinfo{author}{Van~Zuylen, H.J.}, \bibinfo{year}{2008}.
\newblock \bibinfo{title}{Capacity reduction at incidents: Empirical data collected from a helicopter}.
\newblock \bibinfo{journal}{Transportation research record} \bibinfo{volume}{2071}, \bibinfo{pages}{19--25}.
\bibitem[{Laval and Leclercq(2010)}]{laval2010mechanism}
\bibinfo{author}{Laval, J.A.}, \bibinfo{author}{Leclercq, L.}, \bibinfo{year}{2010}.
\newblock \bibinfo{title}{A mechanism to describe the formation and propagation of stop-and-go waves in congested freeway traffic}.
\newblock \bibinfo{journal}{Philosophical Transactions of the Royal Society A: Mathematical, Physical and Engineering Sciences} \bibinfo{volume}{368}, \bibinfo{pages}{4519--4541}.
\bibitem[{Laval et~al.(2014)Laval, Toth and Zhou}]{laval2014parsimonious}
\bibinfo{author}{Laval, J.A.}, \bibinfo{author}{Toth, C.S.}, \bibinfo{author}{Zhou, Y.}, \bibinfo{year}{2014}.
\newblock \bibinfo{title}{A parsimonious model for the formation of oscillations in car-following models}.
\newblock \bibinfo{journal}{Transportation Research Part B: Methodological} \bibinfo{volume}{70}, \bibinfo{pages}{228--238}.
\bibitem[{Li and Ouyang(2011)}]{li2011characterization}
\bibinfo{author}{Li, X.}, \bibinfo{author}{Ouyang, Y.}, \bibinfo{year}{2011}.
\newblock \bibinfo{title}{Characterization of traffic oscillation propagation under nonlinear car-following laws}.
\newblock \bibinfo{journal}{Procedia-Social and Behavioral Sciences} \bibinfo{volume}{17}, \bibinfo{pages}{663--682}.
\bibitem[{Li et~al.(2024)Li, Zhou, Chen and Zhang}]{li2024disturbances}
\bibinfo{author}{Li, Z.}, \bibinfo{author}{Zhou, Y.}, \bibinfo{author}{Chen, D.}, \bibinfo{author}{Zhang, Y.}, \bibinfo{year}{2024}.
\newblock \bibinfo{title}{Disturbances and safety analysis of linear adaptive cruise control for cut-in scenarios: A theoretical framework}.
\newblock \bibinfo{journal}{Transportation Research Part C: Emerging Technologies} \bibinfo{volume}{168}, \bibinfo{pages}{104576}.
\bibitem[{L{\"{o}}fberg(2004)}]{Lofberg2004}
\bibinfo{author}{L{\"{o}}fberg, J.}, \bibinfo{year}{2004}.
\newblock \bibinfo{title}{Yalmip : A toolbox for modeling and optimization in matlab}, in: \bibinfo{booktitle}{In Proceedings of the CACSD Conference}, \bibinfo{address}{Taipei, Taiwan}.
\bibitem[{Ma and Zhang(2021)}]{ma2021drivers}
\bibinfo{author}{Ma, Z.}, \bibinfo{author}{Zhang, Y.}, \bibinfo{year}{2021}.
\newblock \bibinfo{title}{Drivers trust, acceptance, and takeover behaviors in fully automated vehicles: Effects of automated driving styles and driver’s driving styles}.
\newblock \bibinfo{journal}{Accident Analysis \& Prevention} \bibinfo{volume}{159}, \bibinfo{pages}{106238}.
\bibitem[{Mahmassani(2016)}]{mahmassani201650th}
\bibinfo{author}{Mahmassani, H.S.}, \bibinfo{year}{2016}.
\newblock \bibinfo{title}{50th anniversary invited article—autonomous vehicles and connected vehicle systems: Flow and operations considerations}.
\newblock \bibinfo{journal}{Transportation Science} \bibinfo{volume}{50}, \bibinfo{pages}{1140--1162}.
\bibitem[{Mauch and J.~Cassidy(2002)}]{mauch2002freeway}
\bibinfo{author}{Mauch, M.}, \bibinfo{author}{J.~Cassidy, M.}, \bibinfo{year}{2002}.
\newblock \bibinfo{title}{Freeway traffic oscillations: observations and predictions}, in: \bibinfo{booktitle}{Transportation and Traffic Theory in the 21st Century: Proceedings of the 15th International Symposium on Transportation and Traffic Theory, Adelaide, Australia, 16-18 July 2002}, \bibinfo{organization}{Emerald Group Publishing Limited}. pp. \bibinfo{pages}{653--673}.
\bibitem[{McDonald et~al.(2019)McDonald, Alambeigi, Engstr{\"o}m, Markkula, Vogelpohl, Dunne and Yuma}]{mcdonald2019toward}
\bibinfo{author}{McDonald, A.D.}, \bibinfo{author}{Alambeigi, H.}, \bibinfo{author}{Engstr{\"o}m, J.}, \bibinfo{author}{Markkula, G.}, \bibinfo{author}{Vogelpohl, T.}, \bibinfo{author}{Dunne, J.}, \bibinfo{author}{Yuma, N.}, \bibinfo{year}{2019}.
\newblock \bibinfo{title}{Toward computational simulations of behavior during automated driving takeovers: a review of the empirical and modeling literatures}.
\newblock \bibinfo{journal}{Human factors} \bibinfo{volume}{61}, \bibinfo{pages}{642--688}.
\bibitem[{Milan{\'e}s et~al.(2013)Milan{\'e}s, Shladover, Spring, Nowakowski, Kawazoe and Nakamura}]{milanes2013cooperative}
\bibinfo{author}{Milan{\'e}s, V.}, \bibinfo{author}{Shladover, S.E.}, \bibinfo{author}{Spring, J.}, \bibinfo{author}{Nowakowski, C.}, \bibinfo{author}{Kawazoe, H.}, \bibinfo{author}{Nakamura, M.}, \bibinfo{year}{2013}.
\newblock \bibinfo{title}{Cooperative adaptive cruise control in real traffic situations}.
\newblock \bibinfo{journal}{IEEE Transactions on intelligent transportation systems} \bibinfo{volume}{15}, \bibinfo{pages}{296--305}.
\bibitem[{Mohajerpoor and Ramezani(2019)}]{mohajerpoor2019mixed}
\bibinfo{author}{Mohajerpoor, R.}, \bibinfo{author}{Ramezani, M.}, \bibinfo{year}{2019}.
\newblock \bibinfo{title}{Mixed flow of autonomous and human-driven vehicles: Analytical headway modeling and optimal lane management}.
\newblock \bibinfo{journal}{Transportation research part C: emerging technologies} \bibinfo{volume}{109}, \bibinfo{pages}{194--210}.
\bibitem[{Norris(1998)}]{norris1998markov}
\bibinfo{author}{Norris, J.R.}, \bibinfo{year}{1998}.
\newblock \bibinfo{title}{Markov chains}.
\newblock \bibinfo{number}{2}, \bibinfo{publisher}{Cambridge university press}.
\bibitem[{Oh and Yeo(2015)}]{oh2015impact}
\bibinfo{author}{Oh, S.}, \bibinfo{author}{Yeo, H.}, \bibinfo{year}{2015}.
\newblock \bibinfo{title}{Impact of stop-and-go waves and lane changes on discharge rate in recovery flow}.
\newblock \bibinfo{journal}{Transportation Research Part B: Methodological} \bibinfo{volume}{77}, \bibinfo{pages}{88--102}.
\bibitem[{ORAD(2021)}]{on2021taxonomy}
\bibinfo{author}{ORAD}, \bibinfo{year}{2021}.
\newblock \bibinfo{title}{Taxonomy and definitions for terms related to driving automation systems for on-road motor vehicles}.
\newblock \bibinfo{publisher}{SAE international}.
\bibitem[{Qin et~al.(2025)Qin, Luo and Wang}]{qin2025markov}
\bibinfo{author}{Qin, Y.}, \bibinfo{author}{Luo, Q.}, \bibinfo{author}{Wang, H.}, \bibinfo{year}{2025}.
\newblock \bibinfo{title}{Markov chain-based capacity modeling for mixed traffic flow with bi-class connected vehicle platoons on minor road at priority intersections}.
\newblock \bibinfo{journal}{Physica A: Statistical Mechanics and its Applications} \bibinfo{volume}{658}, \bibinfo{pages}{130301}.
\bibitem[{Shao et~al.(2017)Shao, Wang, Liu, Tang, Li, Zhang and Shen}]{shao2017sigmoid}
\bibinfo{author}{Shao, X.}, \bibinfo{author}{Wang, H.}, \bibinfo{author}{Liu, J.}, \bibinfo{author}{Tang, J.}, \bibinfo{author}{Li, J.}, \bibinfo{author}{Zhang, X.}, \bibinfo{author}{Shen, C.}, \bibinfo{year}{2017}.
\newblock \bibinfo{title}{Sigmoid function based integral-derivative observer and application to autopilot design}.
\newblock \bibinfo{journal}{Mechanical Systems and Signal Processing} \bibinfo{volume}{84}, \bibinfo{pages}{113--127}.
\bibitem[{Shladover et~al.(2012)Shladover, Su and Lu}]{shladover2012impacts}
\bibinfo{author}{Shladover, S.E.}, \bibinfo{author}{Su, D.}, \bibinfo{author}{Lu, X.Y.}, \bibinfo{year}{2012}.
\newblock \bibinfo{title}{Impacts of cooperative adaptive cruise control on freeway traffic flow}.
\newblock \bibinfo{journal}{Transportation Research Record} \bibinfo{volume}{2324}, \bibinfo{pages}{63--70}.
\bibitem[{Sontag(2013)}]{sontag2013mathematical}
\bibinfo{author}{Sontag, E.D.}, \bibinfo{year}{2013}.
\newblock \bibinfo{title}{Mathematical control theory: deterministic finite dimensional systems}. volume~\bibinfo{volume}{6}.
\newblock \bibinfo{publisher}{Springer Science \& Business Media}.
\bibitem[{Sugiyama et~al.(2008)Sugiyama, Fukui, Kikuchi, Hasebe, Nakayama, Nishinari, Tadaki and Yukawa}]{sugiyama2008traffic}
\bibinfo{author}{Sugiyama, Y.}, \bibinfo{author}{Fukui, M.}, \bibinfo{author}{Kikuchi, M.}, \bibinfo{author}{Hasebe, K.}, \bibinfo{author}{Nakayama, A.}, \bibinfo{author}{Nishinari, K.}, \bibinfo{author}{Tadaki, S.i.}, \bibinfo{author}{Yukawa, S.}, \bibinfo{year}{2008}.
\newblock \bibinfo{title}{Traffic jams without bottlenecks—experimental evidence for the physical mechanism of the formation of a jam}.
\newblock \bibinfo{journal}{New journal of physics} \bibinfo{volume}{10}, \bibinfo{pages}{033001}.
\bibitem[{Swaroop and Hedrick(1996)}]{swaroop1996string}
\bibinfo{author}{Swaroop, D.}, \bibinfo{author}{Hedrick, J.K.}, \bibinfo{year}{1996}.
\newblock \bibinfo{title}{String stability of interconnected systems}.
\newblock \bibinfo{journal}{IEEE transactions on automatic control} \bibinfo{volume}{41}, \bibinfo{pages}{349--357}.
\bibitem[{Talebpour and Mahmassani(2016)}]{talebpour2016influence}
\bibinfo{author}{Talebpour, A.}, \bibinfo{author}{Mahmassani, H.S.}, \bibinfo{year}{2016}.
\newblock \bibinfo{title}{Influence of connected and autonomous vehicles on traffic flow stability and throughput}.
\newblock \bibinfo{journal}{Transportation research part C: emerging technologies} \bibinfo{volume}{71}, \bibinfo{pages}{143--163}.
\bibitem[{Ward and Wilson(2011)}]{ward2011criteria}
\bibinfo{author}{Ward, J.A.}, \bibinfo{author}{Wilson, R.E.}, \bibinfo{year}{2011}.
\newblock \bibinfo{title}{Criteria for convective versus absolute string instability in car-following models}.
\newblock \bibinfo{journal}{Proceedings of the Royal Society A: Mathematical, Physical and Engineering Sciences} \bibinfo{volume}{467}, \bibinfo{pages}{2185--2208}.
\bibitem[{Younes and Simmons(2004)}]{younes2004solving}
\bibinfo{author}{Younes, H.L.}, \bibinfo{author}{Simmons, R.G.}, \bibinfo{year}{2004}.
\newblock \bibinfo{title}{Solving generalized semi-markov decision processes using continuous phase-type distributions}, in: \bibinfo{booktitle}{AAAI}, p. \bibinfo{pages}{742}.
\bibitem[{Yue et~al.(2023)Yue, Li, Zhou and Zhang}]{yue2023markov}
\bibinfo{author}{Yue, X.}, \bibinfo{author}{Li, Z.}, \bibinfo{author}{Zhou, Y.}, \bibinfo{author}{Zhang, Y.}, \bibinfo{year}{2023}.
\newblock \bibinfo{title}{Markov-based analytical approximation for mixed traffic delay of signalized intersections}, in: \bibinfo{booktitle}{2023 IEEE 26th International Conference on Intelligent Transportation Systems (ITSC)}, \bibinfo{organization}{IEEE}. pp. \bibinfo{pages}{5140--5145}.
\bibitem[{Zhang et~al.(2019)Zhang, De~Winter, Varotto, Happee and Martens}]{zhang2019determinants}
\bibinfo{author}{Zhang, B.}, \bibinfo{author}{De~Winter, J.}, \bibinfo{author}{Varotto, S.}, \bibinfo{author}{Happee, R.}, \bibinfo{author}{Martens, M.}, \bibinfo{year}{2019}.
\newblock \bibinfo{title}{Determinants of take-over time from automated driving: A meta-analysis of 129 studies}.
\newblock \bibinfo{journal}{Transportation research part F: traffic psychology and behaviour} \bibinfo{volume}{64}, \bibinfo{pages}{285--307}.
\bibitem[{Zhong et~al.(2025)Zhong, Zhou, Kamaraj, Zhou, Kontar, Negrut, Lee and Ahn}]{zhong2025human}
\bibinfo{author}{Zhong, X.}, \bibinfo{author}{Zhou, Y.}, \bibinfo{author}{Kamaraj, A.V.}, \bibinfo{author}{Zhou, Z.}, \bibinfo{author}{Kontar, W.}, \bibinfo{author}{Negrut, D.}, \bibinfo{author}{Lee, J.D.}, \bibinfo{author}{Ahn, S.}, \bibinfo{year}{2025}.
\newblock \bibinfo{title}{Human-automated vehicle interactions: Voluntary driver intervention in car-following}.
\newblock \bibinfo{journal}{Transportation Research Part C: Emerging Technologies} \bibinfo{volume}{171}, \bibinfo{pages}{104969}.
\bibitem[{Zhou et~al.(1996)Zhou, Doyle and Glover}]{zhou1996robust}
\bibinfo{author}{Zhou, K.}, \bibinfo{author}{Doyle, J.}, \bibinfo{author}{Glover, K.}, \bibinfo{year}{1996}.
\newblock \bibinfo{title}{Robust and optimal control} .
\bibitem[{Zhou et~al.(2020)Zhou, Ahn, Wang and Hoogendoorn}]{zhou2020stabilizing}
\bibinfo{author}{Zhou, Y.}, \bibinfo{author}{Ahn, S.}, \bibinfo{author}{Wang, M.}, \bibinfo{author}{Hoogendoorn, S.}, \bibinfo{year}{2020}.
\newblock \bibinfo{title}{Stabilizing mixed vehicular platoons with connected automated vehicles: An h-infinity approach}.
\newblock \bibinfo{journal}{Transportation Research Part B: Methodological} \bibinfo{volume}{132}, \bibinfo{pages}{152--170}.

\end{thebibliography}





\end{document}